\newtheorem{theorem}{Theorem}[section]
\newtheorem{lemma}{Lemma}[section]
\begin{document}
\title
{\LARGE \textbf{The characterizing properties of (signless) Laplacian permanental polynomials of bicyclic graphs}}

\author{ Tingzeng Wu\thanks{{Corresponding author.\newline
\emph{E-mail address}: mathtzwu@163.com, zhoumaths19@163.com}}, Tian Zhou\\
{\small School of Mathematics and Statistics, Qinghai Nationalities University, }\\
{\small  Xining, Qinghai 810007, P.R.~China}}

\date{}

\maketitle

\noindent {\bf Abstract: } Let $G$ be a graph with $n$ vertices, and let $L(G)$ and $Q(G)$ be the Laplacian matrix and signless Laplacian matrix of $G$, respectively. The polynomial $\pi(L(G);x)={\rm per}(xI-L(G))$ (resp. $\pi(Q(G);x)={\rm per}(xI-Q(G))$) is called {\em Laplacian permanental polynomial} (resp. {\em signless Laplacian permanental polynomial}) of $G$. In this paper, we  show that two classes of bicyclic graphs are determined by their (signless) Laplacian permanental polynomials.

\smallskip
\noindent\textbf{AMS classification}: 05C31; 05C75; 15A15;  92E10  \\
\noindent {\bf Keywords:} Permanent; (Signless) Laplacian matrix; (Signless) Laplacian permanental polynomial; (Signless) Laplacian copermanental
\section{Introduction}

Let $G$ be a simple and undirected graph with vertex set $V(G)=\{v_{1}, v_{2},..., v_{n}\}$ and edge set $E(G)$. Denote the degree of a vertex $ v_{i}$ by $d(v_{i})$.
The cycle, path and complete graph on $n$ vertices are denoted by $C_{n}$, $P_{n}$ and $K_{n}$, respectively.
We give the definitions of  three types of special graphs which are used later.
\begin{itemize}
\item Let $D_{r,n-r}$ be a graph obtained by joining an edge between a vertex of the cycle $C_{r}$ ($3\leq r\leq n-1$) and a pendant vertex of the path $P_{n-r}$ ($n\geq4$).

\item Let $d(p, q, r)$ be a bicyclic graph on $n$ vertices obtained by identifying $C_{p}$ ($p\geq3$) and $C_{q}$ ($q\geq3$) with two different end vertices of $P_{r}$. See Figure \ref{fig1}.

\item Let $\theta(p, q, r)$  be a bicyclic graph with $n$ vertices consisting of two given vertices joined by three disjoint paths whose order are $p$, $q$ and $r$, respectively, where $p\geq 0$ , $q\geq 0$ and $r \geq 0$, and at most one of them is 0. See Figure \ref{fig1}.

\end{itemize}

The {\em permanent} of $n\times n$ matrix $X=(x_{ij})(i,j=1,2,\ldots,n)$ is defined as $${\rm per}(X)=\sum_{\sigma}\prod_{i=1}^{n}x_{i\sigma(i)},$$
where the sum is taken over all permutations $\sigma$ of $\{1,2,\ldots, n\}$. Valiant \cite{val} has shown that computing the permanent is \#P-complete even when restricted to (0, 1)-matrices.

Let $G$ be a graph, and let $D(G)=diag(d(v_{1}), d(v_{2}),..., d(v_{n}))$ be the diagonal matrix of vertex degrees of $G$. The
{\em Laplacian matrix} is $L(G)=D(G)-A(G)$ and {\em signless Laplacian matrix} is $Q(G)=D(G)+A(G)$, where $A(G)$ is the $(0, 1)$-adjacency matrix.
The polynomial
\begin{eqnarray*}\label{equ1}
\pi(L(G);x)={\rm per}(xI-L(G)) (\text{resp.}  \pi(Q(G);x)={\rm per}(xI-Q(G)))
 \end{eqnarray*}
is called  {\em Laplacian permanental polynomial} (resp. {\em signless Laplacian permanental polynomial}) of $G$, where $I$ is the identity matrix of size $n$.

Graphs $G$ and $H$ are called {\em   Laplacian copermanental} if $\pi(L(G);  x)=\pi(L(H); x)$. Similarly, if $\pi(Q(G); x)=\pi(Q(H); x)$ then $G$ and $H$ are {\em  signless Laplacian copermanental}. We say a graph $G$ is determined by its (signless) Laplacian permanental polynomial if any graph (signless) Laplacian copermanental with $G$ is isomorphic to $G$.

The Laplacian permanental polynomials of graphs were first studied by Merris et al. \cite{mer}. The studies on
 Laplacian permanental polynomial mainly focus on two aspects, one is computing the coefficients of Laplacian permanental polynomial of a graph \cite{bap,cas,gen,gen2, lis, liu, liuwu1, mer2, vrb}. The other is distinguishing graphs by the Laplacian permanental polynomial. Merris et al. \cite{mer} first considered the problem which graph is determined by its Laplacian permanental polynomial. They stated that they do not know of a pair of
nonisomorphic Laplacian copermanental graphs. And they found that there exist no Laplacian copermanental graphs for all connected graphs with 6 vertices. Furthermore, Merris and collaborators \cite{bot} proved that no two trees are Laplacian copermanental. Recently, Liu \cite{liu} showed that complete graphs and complete bipartite graphs are determined by their Laplacian permanental polynomials. Liu and Wu \cite{liuwu2} proved that path, cycle and lollipop graph are determined by their Laplacian permanental polynomials.

Faria \cite{far} first considered the signless Laplacian permanental polynomial of $G$. And he found that $\pi(Q(G);x)=\pi(L(G);x)$ when $G$ is a bipartite graph. Furthermore, he discussed the multiplicity of interger roots of  $\pi(Q(G);x)$ \cite{far2}. Up to now,
only a few results have been obtained on the signless Laplacian permanental polynomials. Li and Zhang \cite{lizh,lizh2} gave  the bounds of constant terms of signless Laplacian permanental polynomials of some graphs. Liu \cite{liu} showed that complete graphs and complete bipartite graphs are determined by their signless Laplacian permanental polynomials. Liu and Wu \cite{liuwu2} proved that path, cycle and lollipop graph are determined by their signless Laplacian permanental polynomials. Recently, Wu et al. \cite{wut} proved that $d(3, 3, r)$ are determined by its signless Laplacian permanental polynomial. And they proposed a problem as follows.

\noindent{\bf Problem:} Prove that graphs $d(p, q, r)$ and $\theta(p, q, r)$  are determined by their (signless) Laplacian permanental polynomials.

In this paper, we mainly focus on the problem. And we give the solution to the problem as follows.
\begin{theorem}\label{art11}
Graph $d(p, q, r)$ is determined by its Laplacian permanental polynomial.
\end{theorem}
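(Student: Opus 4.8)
The plan is to take an arbitrary graph $H$ that is Laplacian copermanental with $G=d(p,q,r)$ and deduce $H\cong G$ in three stages: first fix the degree sequence of $H$, then narrow $H$ to one of two structural types, and finally use cycle counts to pin down the parameters. Throughout I would use the expansion of the permanent of $xI-L(G)$ over permutations grouped by their fixed-point sets, which gives, for $\pi(L(G);x)=\sum_{k=0}^{n}c_kx^{n-k}$, the formula $c_k=(-1)^k\sum_{|S|=k}\mathrm{per}(L(G)_S)$, where $L(G)_S$ is the principal submatrix indexed by $S$.

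First I would read off coarse invariants. From $c_1=-\sum_i d_i$ one recovers $m=|E(G)|$, and since $G$ is bicyclic, $m=n+1$. From $c_2=\sum_{i<j}(d_id_j+a_{ij})$ together with $c_1$ one recovers the first Zagreb index $M_1=\sum_i d_i^2$, which for $G$ equals $4n+10$. Hence any copermanental $H$ satisfies $\sum_i(d_i-2)=2m-2n=2$ and $\sum_i(d_i-2)^2=M_1-8m+4n=2$. Since these are nonnegative integers with total $2$ and total deviation $2$, exactly two vertices of $H$ have degree $3$ and the remaining $n-2$ have degree $2$; in particular $H$ has the degree sequence of $G$.

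Next I would classify the graphs with this degree sequence. Suppressing all degree-$2$ vertices of a connected such graph yields a connected multigraph on the two degree-$3$ vertices in which both have degree $3$; the only possibilities are three parallel edges or two loops joined by an edge, which re-expand to the theta graph $\theta(p',q',r')$ and the dumbbell $d(p',q',r')$, respectively. A disconnected $H$ must, by parity of degree sums within a component, consist of one connected bicyclic component (again a theta or a dumbbell) together with one or more cycle components. To separate these cases I would compare cycle data: because the permanent is multiplicative over connected components, $\pi(L(H);x)$ factors accordingly, and I would show that the number of cycles of each length is an invariant of $\pi(L(G);x)$. Granting this, $G$ has exactly two cycles (of lengths $p$ and $q$), a theta graph has three, and a disconnected candidate has at least three, so $H$ must be a dumbbell $d(p',q',r')$ with cycle-length multiset $\{p',q'\}=\{p,q\}$; combined with $n=p+q+r-2$ this forces $r'=r$ and hence $H\cong G$.

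The main obstacle is the cycle-spectrum step: proving that the multiset of cycle lengths is read off from $\pi(L(G);x)$. Unlike the characteristic polynomial, the permanent produces no sign cancellation, so the expansion of $\sum_{|S|=\ell}\mathrm{per}(L(G)_S)$ collects contributions from every principal submatrix — paths, unions of smaller cycles and edges, and diagonal entries alone — each weighted by products of degrees, in addition to the clean contribution $2\cdot(\text{number of induced }C_\ell)$ coming from the two cyclic permutations of an induced $\ell$-cycle. The real work, which I would carry out using the coefficient formulas developed in the references on Laplacian permanental coefficients, is to show that this degree-dependent background in $c_\ell$ is itself determined by the already-known degree sequence and by the cycle counts at lengths below $\ell$, so that the number of $\ell$-cycles can be isolated inductively. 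Once that bookkeeping is under control the structural dichotomy above closes the argument; the low-girth instances, where $p$ or $q$ equals $3$ and the triangle term already appears in $c_3$, serve as a useful check.
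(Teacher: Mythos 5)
Your first two stages are correct and essentially coincide with the paper's opening moves: the paper gets the degree sequence $(3^{2},2^{n-2})$ from its Lemma \ref{art213} (since $d(p,q,r)$ is nearly regular), and the classification of graphs with this degree sequence into $d(p',q',r')$, $\theta(p',q',r')$, or one of these plus disjoint cycles is the same case split the paper uses. The proof breaks at exactly the step you flag as ``the real work'': the claim that the cycle-length multiset can be read off $\pi(L(G);x)$ by inductively isolating the number of $\ell$-cycles inside $c_{\ell}$. This is not deferred bookkeeping; it is the entire difficulty, and as stated the induction does not close. By the permanental expansion over principal submatrices (the analogue of Lemma \ref{art211} applied to each $L(G)_S$), $c_{\ell}$ is a sum over \emph{all} subgraphs whose components are single edges or cycles on at most $\ell$ vertices, each weighted by $2^{c(H)}$ times degree products over the uncovered vertices of $S$. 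The ``background'' you must subtract therefore contains all weighted matching contributions, and the number of $j$-matchings of a graph is \emph{not} a function of its degree sequence and its shorter cycle counts: already for $j\ge 3$ it depends on the joint degree structure (e.g.\ whether the two degree-$3$ vertices are adjacent, and how far apart they sit), which is precisely what differs among the candidate graphs $d(p',q',r')$, $\theta(p',q',r')$ and their unions with cycles. So the assertion ``determined by the already-known degree sequence and by the cycle counts at lengths below $\ell$'' is false as stated, no cited reference supplies a repaired version, and without it you cannot conclude that a theta graph or a dumbbell-plus-cycles has a different polynomial from $d(p,q,r)$.

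The paper avoids any such invariance claim by brute comparison of explicit formulas: it computes $\pi(L(d(p,q,r));x)$ and $\pi(L(\theta(p,q,r));x)$ in closed form via the coalescence/vertex expansion (Lemmas \ref{art25}, \ref{art26}) and the substitution $x=\frac{y^{2}+2y-1}{y}$ with the transfer-type evaluations of Lemma \ref{art27}, obtaining (\ref{equ2}) and (\ref{equ3}); it then matches the terms of largest exponent in $f_{L}(p,q,r;y)$ to show that two dumbbells (resp.\ two thetas) with equal polynomials have equal parameters (Lemmas \ref{art33}, \ref{art34}), and it evaluates at $y=1$ (Lemmas \ref{art28}, \ref{art31}, \ref{art32}), using multiplicativity over components, to exclude the disconnected candidates. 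If you want to salvage your outline, you must either prove the cycle-spectrum extraction for this restricted family --- which in practice amounts to redoing the paper's explicit polynomial comparison --- or replace that step by directly comparing the polynomials of the finitely many structural types, which is what the paper does.
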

\begin{theorem}\label{art12}
Graph $\theta(p, q, r)$ is determined by its Laplacian permanental polynomial.
\end{theorem}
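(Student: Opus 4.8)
The plan is to recover $\theta(p,q,r)$ from the coefficients of its Laplacian permanental polynomial. Write $\pi(L(G);x)=\sum_{k=0}^{n}(-1)^{k}c_{k}(G)\,x^{n-k}$, so that $c_{k}(G)=\sum_{|S|=k}\mathrm{per}(L(G)[S])$, where $L(G)[S]$ denotes the principal submatrix of $L(G)$ on the vertex subset $S$. First I would record the low-order coefficients $c_{0}=1$, $c_{1}=2m$ and $c_{2}=2m^{2}+m-\tfrac12\sum_{i}d(v_{i})^{2}$, using that a $2\times2$ principal permanent expands as $d(v_i)d(v_j)+a_{ij}$. Since a copermanental graph $H$ shares every $c_{k}$, it has the same number of vertices $n$, the same number of edges $m=n+1$ (so $H$ is bicyclic in total), and the same value of $\sum_i d(v_i)^2$.

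For $\theta(p,q,r)$ one has two vertices of degree $3$ and $n-2$ of degree $2$, whence $\sum_i d(v_i)=2n+2$ and $\sum_i d(v_i)^2=4n+10$, and therefore $\sum_i(d(v_i)-2)^2=\sum_i d(v_i)^2-4\sum_i d(v_i)+4n=2$. As the degrees are nonnegative integers with $\sum_i(d(v_i)-2)=2$, the identity $\sum_i(d(v_i)-2)^2=2$ forces exactly two vertices of degree $3$ and $n-2$ of degree $2$; hence $H$ has this degree sequence as well. Since in every component the number of odd-degree vertices is even and the only odd degree present is $3$, the two degree-$3$ vertices of $H$ lie in a single component $H_{0}$, while every other component is $2$-regular, i.e.\ a cycle. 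Suppressing the degree-$2$ vertices of $H_{0}$ yields a $3$-regular multigraph on two vertices, which is either three parallel edges or two loops joined by an edge; thus $H_{0}$ is a theta graph $\theta(p',q',r')$ or a dumbbell graph $d(p',q',r')$. It remains to exclude the extra cycle components and the dumbbell, and then to match parameters.

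For the finer analysis I would use the subgraph expansion of $\mathrm{per}(L(G)[S])$: a permutation of $S$ splits into fixed points (contributing $d(v_i)$), transpositions along edges (contributing $a_{ij}$), and cyclic factors of length $\ell\ge 3$ that are nonzero only along cycles of $G$ (each such cycle contributing $2(-1)^{\ell}$). Consequently every $c_{k}(G)$ is a polynomial in the degrees together with counts of matchings and of \emph{vertex-disjoint unions of cycles}. The decisive observation is that in a theta graph any two of the three cycles share the middle path, so $\theta(p,q,r)$ contains \emph{no} two vertex-disjoint cycles, whereas a dumbbell, and any graph possessing a separate cycle component, does contain two vertex-disjoint cycles. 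Comparing the coefficients that collect the two-disjoint-cycle contributions therefore rules out both the dumbbell $H_{0}$ and the presence of extra cycle components, forcing $H$ to be a connected theta graph.

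Finally, writing the three path lengths (in edges) as $a,b,c$ with $a+b+c=m=n+1$, the three cycles of $\theta(p,q,r)$ have lengths $a+b$, $b+c$ and $c+a$; once the coefficients are shown to determine the multiset of single-cycle lengths, solving $a=\tfrac12[(a+b)+(a+c)-(b+c)]$ and its cyclic analogues recovers $\{a,b,c\}$ and hence $(p,q,r)$, giving $H\cong\theta(p,q,r)$. I expect the main obstacle to be the separation of the theta and dumbbell cases: they share $n$, $m$ and the entire degree sequence, so only the cycle-structure coefficients distinguish them, and—unlike the Laplacian characteristic polynomial—the permanent admits no sign cancellation to make these coefficients transparent. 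The real work is thus to pin down, by explicit permanent computations, exactly which coefficient first detects a pair of vertex-disjoint cycles and to verify that it vanishes for $\theta(p,q,r)$ but not for the competing graphs; a linear recurrence for $\pi(L(\theta(p,q,r));x)$ in one path parameter should then dispatch the routine parameter-matching.
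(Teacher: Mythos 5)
Your opening is sound and even a bit cleaner than the paper's: recovering $n$, $m$ and $\sum_i d_i^2$ from the low-order coefficients, then forcing the degree sequence $(3^2,2^{n-2})$ from $\sum_i(d_i-2)=2$ and $\sum_i(d_i-2)^2=2$, is a correct, self-contained replacement for the paper's appeal to the nearly-regular lemma (Lemma \ref{art213}). Your structural classification — the two degree-$3$ vertices lie in one component which is a theta or a dumbbell, all other components being cycles — also matches what the paper uses implicitly. The problem is everything after that: the two steps that constitute the actual proof are announced but never carried out. You assert that ``comparing the coefficients that collect the two-disjoint-cycle contributions'' excludes the dumbbell and extra cycle components, but the coefficients $c_k$ are not counts of disjoint-cycle pairs; by the very expansion you invoke (essentially Lemma \ref{art211} of the paper), each $c_k$ mixes degree products, matching counts, single-cycle and multi-cycle contributions. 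A dumbbell-plus-cycles graph and a theta graph with the same $n$ and degree sequence have \emph{different} matching counts and different adjacency among the degree-$3$ vertices, so there is no a priori reason the disjoint-cycle discrepancy survives in any fixed coefficient rather than being cancelled by these other terms; deciding this requires exactly the explicit computation you defer. Your own text concedes this (``the real work is thus to pin down\dots''), which is an admission that the discrimination step is missing, not proved.

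The same applies to the parameter recovery: the reduction from cycle lengths $\{a+b,b+c,c+a\}$ to $\{a,b,c\}$ is trivial, but the premise — that the polynomial determines the multiset of single-cycle lengths — is precisely the hard claim, and it is left as a hope. The paper fills both holes by brute force: it substitutes $x=(y^2+2y-1)/y$ to obtain closed forms (its equations (\ref{equ2}) and (\ref{equ3})), pins down $(p,q,r)$ by comparing extremal exponents of $f_L$ together with the permanent formula of Lemma \ref{art211} (its Lemma \ref{art34}), and kills the dumbbell and disconnected alternatives by evaluating at $y=1$ (its Lemmas \ref{art28}, \ref{art31}, \ref{art32}). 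Some such quantitative computation, whether by the paper's generating-function substitution or by a transfer-matrix recurrence as you suggest at the end, is indispensable; without it your proposal is a plausible strategy, not a proof.
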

\begin{theorem}\label{art13}
Graph $d(p, q, r)$ is determined by its signless Laplacian permanental polynomial.
\end{theorem}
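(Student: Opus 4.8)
The plan is to prove Theorem~\ref{art13} by showing that the signless Laplacian permanental polynomial $\pi(Q(G);x)$ of any graph $G$ that is signless Laplacian copermanental with $d(p,q,r)$ forces $G$ to be isomorphic to $d(p,q,r)$. The strategy rests on two pillars: first, certain structural invariants of a graph (number of vertices, number of edges, degree sequence information, girth-type data, number of triangles/quadrilaterals, etc.) can be read off from the coefficients of $\pi(Q(G);x)$; and second, within the class of bicyclic graphs that share these invariants with $d(p,q,r)$, one can pin down the graph exactly. I would begin by expanding $\pi(Q(G);x)=\sum_{i=0}^{n} c_i(G)\, x^{n-i}$ and recording the combinatorial meaning of the low-order coefficients $c_1,c_2,c_3,c_4$ in terms of the degree sequence and small subgraph counts of $G$; since the excerpt's earlier coefficient results (in the cited literature, and presumably lemmas proved earlier in the full paper) give these formulas, I would invoke them directly. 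In particular $c_1$ recovers $\sum_i d(v_i)=2|E(G)|$, and comparing $n=|V|$ from the degree of the polynomial already gives $|V(G)|=|V(d(p,q,r))|$ and $|E(G)|=|E(d(p,q,r))|$, so $G$ has exactly one more edge than a tree on the same vertex set, i.e.\ $G$ is bicyclic (once connectivity is confirmed; connectivity of $G$ should also follow from a coefficient/root argument).

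Next I would extract from the higher coefficients enough information to locate the two cycles of $G$ and their lengths. The key point for $d(p,q,r)$ is that it consists of two vertex-disjoint cycles $C_p$ and $C_q$ linked by a path, so the two cycle lengths $p,q$ and the connecting path length $r$ are the three parameters that must be recovered. I would argue that the multiset of cycle lengths of a bicyclic graph is encoded in the permanental coefficients (the counts of subgraphs isomorphic to disjoint unions of cycles and matchings contribute to specific $c_i$), so that any copermanental $G$ must have the same girth and the same pair of cycle lengths $\{p,q\}$. Having fixed that $G$ is bicyclic with cycles of lengths $p$ and $q$, the remaining task is to show the cycles are vertex-disjoint (ruling out the $\theta$-type and the "two cycles sharing a vertex" type) and joined by a path of the correct length, with no extra pendant structure. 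Here I would use the degree-sequence data from $c_1,c_2$ together with the number of vertices of degree $3$ (exactly two for $d(p,q,r)$) and the fact that all other vertices have degree $2$ except the two path endpoints — the whole degree sequence of $d(p,q,r)$ is very rigid.

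The main obstacle, as in all "determined by the polynomial" results, is the final uniqueness step: after matching all the invariants that the coefficients supply, I must rule out every non-isomorphic bicyclic graph that accidentally shares those invariants. Concretely, the danger is a cospectral-type collision where a different arrangement of the same two cycles and the same total path length (for instance distributing the path differently, or attaching pendants that alter local structure while preserving the first several coefficients) yields the same $\pi(Q;x)$. To close this gap I would compute, or cite from earlier in the paper, an explicit recursive formula for $\pi(Q(G);x)$ under edge- or vertex-deletion (an analogue of the Laplacian expansion used for Theorems~\ref{art11}--\ref{art12}) and compare the full polynomials of $d(p,q,r)$ against each surviving candidate, showing the polynomials differ in some coefficient unless the candidate is isomorphic to $d(p,q,r)$. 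I expect the bulk of the work and the genuine difficulty to lie precisely in this exhaustive elimination — organizing the finitely many structural possibilities for a bicyclic graph matching the invariants and verifying that only $d(p,q,r)$ reproduces the entire coefficient sequence; the bipartite coincidence $\pi(Q;x)=\pi(L;x)$ noted by Faria will let me transfer part of the analysis from Theorem~\ref{art11} whenever $d(p,q,r)$ and its competitors happen to be bipartite, reducing the truly new signless-Laplacian casework to the non-bipartite (odd-cycle) cases.
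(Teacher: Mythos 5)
Your proposal is a plan rather than a proof, and the two places where you defer the work are exactly the places where the paper must do something nontrivial. First, connectivity: you concede it must be ``confirmed'' and suggest it ``should follow from a coefficient/root argument,'' but no such argument is available here, and the danger is concrete --- a disconnected graph $G=d(p',q',r')\cup\bigl(\bigcup_{i}C_{k_{i}}\bigr)$ or $G=\theta(p',q',r')\cup\bigl(\bigcup_{i}C_{k_{i}}\bigr)$ of the same total order has the same number of vertices, the same number of edges, and the same degree sequence $(3^{2},2^{n-2})$ as $d(p,q,r)$, so every invariant you propose to read from the low-order coefficients (Lemmas~\ref{art22} and \ref{art24}) fails to exclude it. The paper kills these candidates by a genuinely different device: it computes the whole polynomial under the substitution $x=\frac{y^{2}+2y-1}{y}$ and evaluates at $y=1$ (Lemmas~\ref{art210}, \ref{art35}, \ref{art37}), using that the polynomial of a disjoint union is the product of the components' polynomials, so each extra cycle multiplies the value at $y=1$ by $3(-1)^{k_{i}}+1\in\{-2,4\}$, producing a value that cannot equal $\pi(Q(d(p,q,r));y=1)$. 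Nothing in your sketch supplies a substitute for this step.

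Second, the uniqueness step among connected candidates. Your claim that ``the multiset of cycle lengths of a bicyclic graph is encoded in the permanental coefficients'' is asserted, not proved: the coefficient formulas you invoke only recover the number of triangles (and, via $q_{4}$, partial quadrilateral information), not the lengths $p$, $q$, $r$ in general. You do eventually name the correct mechanism --- compute $\pi(Q(d(p,q,r));x)$ by recursion and compare entire polynomials --- and this is indeed what the paper does: Lemmas~\ref{art25}, \ref{art26} and \ref{art29} yield the closed form of Equation~(\ref{equ4}), whose auxiliary function $f_{Q}(p,q,r;y)$ has its highest-exponent terms compared, together with the permanent formula of Lemma~\ref{art212}, to force $\{p,q\}=\{p',q'\}$ and $r=r'$ (Lemma~\ref{art36}). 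But you explicitly leave this computation, which you yourself call ``the bulk of the work,'' unexecuted, so the proposal does not establish the theorem. Finally, your idea of transferring cases from Theorem~\ref{art11} via Faria's identity $\pi(Q(G);x)=\pi(L(G);x)$ applies only when the graphs involved are bipartite (both cycle lengths even, and likewise for every competitor), so it cannot replace the signless Laplacian computation in general; note also that the signless analogues of the auxiliary formulas genuinely differ from the Laplacian ones (compare Lemma~\ref{art29}$(d)$ with Lemma~\ref{art27}$(d)$), which is why the paper redoes the computation rather than quoting Section~3.
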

\begin{theorem}\label{art14}
Graph $\theta(p, q, r)$ is determined by its signless Laplacian permanental polynomial.
\end{theorem}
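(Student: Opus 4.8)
The plan is to show that any graph $H$ that is signless Laplacian copermanental with $\theta:=\theta(p,q,r)$ must be isomorphic to it, by first recovering the coarse invariants from the coefficients of $\pi(Q(\,\cdot\,);x)$ and then pinning down the cyclic structure. Writing $\pi(Q(G);x)=\sum_{k=0}^{n}q_k x^{n-k}$ and using the principal-submatrix expansion $q_k=(-1)^k\sum_{|S|=k}\mathrm{per}(Q(G)[S])$, the top coefficients immediately show that $\pi$ determines $n=\deg\pi$, the edge number through $q_1=-\sum_i d(v_i)=-2m$, and the first Zagreb index through $q_2=\sum_{i<j}d(v_i)d(v_j)+m$. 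Since $\theta$ is bicyclic we have $m=n+1$, whence $\sum_i d(v_i)=2n+2$ and $\sum_i d(v_i)^2=4n+10$. Putting $e_i=d(v_i)-2$, these two identities read $\sum_i e_i=2$ and $\sum_i e_i^2=2$, which over the integers force exactly two vertices of degree $3$ and the remaining $n-2$ of degree $2$. Thus $H$ has the degree sequence of a theta or dumbbell graph and, in particular, minimum degree $2$ (and no vertex of degree $\ge 4$).

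First I would use minimum degree $2$ to observe that every component of $H$ contains a cycle, and combine this with $m=n+1$ to conclude that $H$ has cyclomatic number $2$ in total: either $H$ is connected and bicyclic, or it splits into one bicyclic (resp. unicyclic) piece together with disjoint cycles carrying the remaining degree-$2$ vertices. By the classification of bicyclic graphs of minimum degree $2$ and maximum degree $3$ (the forced degree sequence excludes a degree-$4$ vertex and hence the figure-eight graph), the only connected candidates are the theta graphs $\theta(p',q',r')$ and the dumbbell graphs $d(p',q',r')$. The decisive structural distinction is that the three cycles of a theta graph pairwise share a path, so \emph{no two of its cycles are vertex-disjoint}, whereas a dumbbell contains two vertex-disjoint cycles, and any disconnected graph of minimum degree $2$ likewise contains two vertex-disjoint cycles.

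The heart of the argument is to read the \emph{incidence pattern of the cycles} off $\pi(Q(H);x)$. Here I would exploit that, because $Q=D+A$ has nonnegative entries, each $\mathrm{per}(Q[S])$ is a sum of nonnegative contributions, one for each way of covering a subset of $S$ by vertex-disjoint edges and graph-cycles (each cycle of length $\ge 3$ counted twice for its two orientations, and each odd cycle contributing positively rather than with the sign it carries in the Laplacian case) weighted by the product of the degrees of the uncovered vertices. Since the degree sequence, and hence all purely degree- and matching-dependent parts of the $q_k$, are already fixed and common to $H$ and $\theta$, the equality $\pi(Q(H);x)=\pi(Q(\theta);x)$ forces the cycle-carrying parts of the coefficients to agree as well. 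In particular the contribution of permutations that are a product of two disjoint cycles — the first such term occurring in $q_{\ell_1+\ell_2}$ for disjoint cycles of lengths $\ell_1,\ell_2$ — vanishes for $\theta$ and must therefore vanish for $H$, which is exactly the statement that $H$ has no two vertex-disjoint cycles, so $H$ is a connected theta graph. Finally, the single-cycle contributions record the multiset of cycle lengths of $H$, which for a theta graph are the three pairwise sums of the path lengths and determine $\{p,q,r\}$ uniquely; matching them to those of $\theta(p,q,r)$ gives $H\cong\theta(p,q,r)$.

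I expect the main obstacle to be the rigorous disentangling carried out in the previous paragraph. Unlike the determinantal (characteristic-polynomial) setting, the permanent admits no cancellation and no clean matrix-tree interpretation, so the coefficients mix matching counts, single-cycle counts, and products of disjoint cycles, all with degree weightings; isolating the disjoint-cycle term requires an inductive bookkeeping in $k$ in which the already-determined invariants are subtracted off at each stage. A secondary technical point, specific to the \emph{signless} case, is the treatment of odd cycles: since $\theta(p,q,r)$ may be non-bipartite, one cannot simply quote Theorem~\ref{art12} through Faria's identity $\pi(Q)=\pi(L)$, which holds only in the bipartite case. The bipartite thetas are dispatched that way once one checks that $H$ is bipartite, but the non-bipartite thetas must be handled directly through the positive odd-cycle contributions described above.
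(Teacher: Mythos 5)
Your opening moves are sound and agree in substance with the paper: the coefficient formulas (Lemma \ref{art24}) recover $n$, $m=n+1$ and $\sum_i d_i^2$, and your computation with $e_i=d_i-2$ correctly forces the degree sequence $(3^2,2^{n-2})$ (the paper gets this from Lemma \ref{art213}); the list of candidates --- theta graphs, dumbbells, or one such graph together with disjoint cycles --- is also the right one, and the structural criterion you choose (a theta graph has no two vertex-disjoint cycles, while every other candidate does) is correct graph theory. The gap is the step you yourself call the ``main obstacle,'' and it is not a technicality: it is the whole problem. You assert that, since $H$ and $\theta(p,q,r)$ have the same degree sequence, ``all purely degree- and matching-dependent parts of the $q_k$ are already fixed and common to $H$ and $\theta$,'' so that equality of the polynomials forces the cycle-carrying parts to agree, and in particular forces the two-disjoint-cycle contribution of $H$ to vanish. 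This does not follow: matching numbers are \emph{not} degree-sequence invariants, and a dumbbell with the same degree sequence as a theta graph will in general have different $k$-matching counts. In the expansion underlying Lemmas \ref{art211} and \ref{art212}, applied to the principal submatrices of $Q(H)$, the coefficient $|q_k|$ equals $\sum_F 2^{c(F)}\, e_{k-|V(F)|}\bigl(\{d_i : v_i\notin V(F)\}\bigr)$, the sum running over all subgraphs $F$ on at most $k$ vertices whose components are edges or cycles; here matchings of unknown multiplicity, weighted by symmetric functions of the degrees of the uncovered vertices (weights that depend on how many of the two degree-$3$ vertices $F$ covers), are mixed inseparably with the single-cycle and two-cycle terms you want to isolate. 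Nonnegativity prevents cancellation \emph{within} each $|q_k|$, but you are comparing two such nonnegative sums, so nothing forces structurally corresponding pieces to match term by term. Already at $q_4$ the equality pins down only a mixed combination of $2$-matchings, $4$-cycles and degree data (cf.\ Lemma \ref{art22}$(v)$), not each quantity separately; one equation per $k$ against several new unknowns per $k$ means your proposed induction does not close.

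This is also exactly where your route diverges from the paper, which never attempts a combinatorial disentangling. Instead it computes $\pi(Q(d(p,q,r)))$ and $\pi(Q(\theta(p,q,r)))$ in closed form via the deletion recursions of Lemmas \ref{art25} and \ref{art26} under the substitution $x=\frac{y^2+2y-1}{y}$ (Equations (\ref{equ4}) and (\ref{equ5})), then (i) rules out a second theta graph by comparing the extremal exponents of $f_Q$ together with Lemma \ref{art212} (Lemma \ref{art38}), and (ii) rules out dumbbells and disconnected candidates by evaluating at $y=1$ (i.e.\ $x=2$), where Lemmas \ref{art35}, \ref{art37} and \ref{art210}$(d)$ give explicit values to compare. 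To repair your argument you would need either to carry out the refined bookkeeping (tracking matchings by how they meet the two degree-$3$ vertices, for every $k$) or to replace that step with an evaluation argument of the paper's type; your side remark that bipartite thetas can be dispatched via Faria's identity has the same defect, since you have not shown that bipartiteness of $H$ is readable from the polynomial. As written, the conclusion that $H$ contains no two vertex-disjoint cycles is not established, so the proof is incomplete at its central step.
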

 \begin{figure}[htbp]
\begin{center}
\includegraphics[scale=0.6]{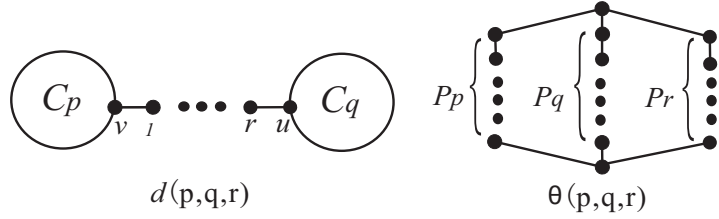}
\caption{\label{fig1}\small
{Bicyclic graphs $d(p,q,r)$ and $\theta(p,q,r)$.}}
\end{center}
\end{figure}

\section{Some Lemmas}
\begin{lemma}\label{art21}{\cite{liu}}
Let $G$ be a graph with $n$ vertices and $m$ edges, and let $(d_{1}, d_{2},..., d_{n})$
be the degree sequence of $G$. Suppose that $\pi(L(G);x)=\sum\limits_{i}p_{i}(G)x^{n-i}$. Then\\
\noindent
$(i)$ $p_{0}(G)=1$;\\
$(ii)$ $p_{1}(G)=-2m$;\\
$(iii)$ $p_{2}(G)=2m^{2}+m-\frac{1}{2}\sum\limits_{i}d^{2}_{i}$; \\
$(iv)$ $p_{3}(G)=-\frac{1}{3}\sum\limits_{i}d^{3}_{i}+
(m+1)\sum\limits_{i}d^{2}_{i}-\frac{4}{3}m^3-2m^2+2c_{3}(G)$;\\
$(v)$ $p_{4}(G)=-\frac{1}{4}\sum\limits_{i}d^{4}_{i}+(\frac{2}{3}m+1)
\sum\limits_{i}d^{3}_{i}-\frac{1}{2}(2m^2+5m+1)\sum\limits_{i}d^{2}_{i}
+\frac{1}{8}(\sum\limits_{i}d^{2}_{i})^2+\sum\limits_{(v_{i}v_{j})\in
E(G)}d_{i}d_{j}+2\sum\limits_{i}d_{i}c_{3}(G_{v_{i}})+2c_{4}(G)-4mc_{3}(G)+\frac{2}{3}m^4+2m^3+\frac{1}{2}m^2
+\frac{1}{2}m$.
\end{lemma}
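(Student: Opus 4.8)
The plan is to compute every coefficient $p_k(G)$ directly from a principal-submatrix expansion of the permanent. First I would record the identity
\[
{\rm per}(xI+M)=\sum_{k=0}^{n}x^{\,n-k}\sum_{|S|=k}{\rm per}(M[S]),
\]
where $M[S]$ denotes the principal submatrix of $M$ on the index set $S$ and the empty permanent is $1$; this follows by writing each diagonal entry as $x+M_{ii}$ and collecting powers of $x$. Taking $M=-L(G)$ and using ${\rm per}((-L)[S])=(-1)^{|S|}{\rm per}(L[S])$ gives
\[
p_k(G)=(-1)^k\sum_{|S|=k}{\rm per}\big(L(G)[S]\big).
\]
Thus each coefficient is, up to sign, a sum of permanents of $k\times k$ principal Laplacian submatrices, and the whole statement reduces to evaluating these sums for $k=0,1,2,3,4$.

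The key combinatorial step is to read off ${\rm per}(L[S])$ from the cycle structure of permutations. Expanding ${\rm per}(L[S])=\sum_{\sigma}\prod_{i\in S}L_{i\sigma(i)}$ and decomposing $\sigma$ into disjoint cycles, a fixed point $i$ contributes the diagonal entry $d_i$; a transposition along an edge $\{i,j\}$ contributes $L_{ij}L_{ji}=1$; a permutation cycle of length $\ell\ge 3$ contributes $(-1)^{\ell}$ exactly when its support spans a cycle of $G$, and each such graph $\ell$-cycle is realized by precisely two oppositely oriented permutation cycles. Hence ${\rm per}(L[S])$ is the sum, over all ways of partitioning $S$ into fixed points, graph edges and graph cycles, of the product of the weights $d_i$ (fixed point), $1$ (edge) and $2(-1)^{\ell}$ (cycle of length $\ell$).

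With this dictionary the computation becomes an enumeration. For $k=1$ only fixed points occur, giving $\sum_i d_i=2m$; for $k=2$ the structures are two fixed points or one edge, giving $\sum_{i<j}d_id_j+m$; for $k=3$ one adds a fixed point together with an edge, and each triangle contributes $2(-1)^3=-2$; for $k=4$ the admissible structures are four fixed points, two fixed points with an edge, two disjoint edges, a fixed point with a triangle, and a single $4$-cycle. Summing each type over all $S$ of the prescribed size and rewriting the elementary symmetric functions of the degrees via $\sum_i d_i=2m$ and Newton's identities produces the stated closed forms; for instance the two-disjoint-edge count equals $\binom{m}{2}-\sum_i\binom{d_i}{2}$, the single $4$-cycle gives $+2c_4(G)$, and the fixed-point-plus-triangle contribution equals $-2\sum_i d_i\,c_3(G-v_i)$, which after writing $c_3(G-v_i)=c_3(G)-c_3(G_{v_i})$ with $c_3(G_{v_i})$ the number of triangles through $v_i$ yields the $-4m\,c_3(G)+2\sum_i d_i c_3(G_{v_i})$ terms.

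The main obstacle is the $k=4$ bookkeeping: one must list all five structure types without omission or double counting, keep the two orientations of each genuine cycle straight, and then collapse the resulting sums of degree-products over constrained index sets into a single expression in $\sum_i d_i^4$, $\sum_i d_i^3$, $\sum_i d_i^2$, $(\sum_i d_i^2)^2$, $\sum_{v_iv_j\in E}d_id_j$, $c_3(G)$, $c_4(G)$ and $\sum_i d_i c_3(G_{v_i})$. The $\tfrac18(\sum_i d_i^2)^2$ term is the delicate one, since it emerges only after expressing the four-fixed-point sum $e_4(d_1,\dots,d_n)$ through power sums and cancelling against the lower structures, so careful tracking of the rational coefficients is where errors are most likely to arise. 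The cases $k\le 3$ are routine once the dictionary is in place.
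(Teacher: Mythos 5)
Your proposal is correct, but it takes a genuinely different route from the paper: the paper does not prove this lemma at all, it simply quotes it from Liu \cite{liu}, so the comparison here is between your self-contained derivation and a proof by citation. Your two key steps both check out. The expansion ${\rm per}(xI+M)=\sum_{k}x^{n-k}\sum_{|S|=k}{\rm per}(M[S])$ is valid by multilinearity of the permanent in rows, giving $p_{k}(G)=(-1)^{k}\sum_{|S|=k}{\rm per}(L(G)[S])$; and your fixed-point/edge/cycle dictionary (weights $d_{i}$, $1$, and $2(-1)^{\ell}$) is exactly the Brualdi--Goldwasser expansion that the paper records as Lemma \ref{art211}, applied to principal submatrices rather than to $L(G)$ itself --- so you could shorten your write-up considerably by invoking that lemma instead of re-deriving the dictionary from the cycle structure of permutations. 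I verified the delicate $k=4$ bookkeeping you flagged: with $e_{4}(d)=\frac{2}{3}m^{4}-m^{2}\sum_{i}d_{i}^{2}+\frac{1}{8}\bigl(\sum_{i}d_{i}^{2}\bigr)^{2}+\frac{2}{3}m\sum_{i}d_{i}^{3}-\frac{1}{4}\sum_{i}d_{i}^{4}$, the edge-plus-two-fixed-points sum equal to $2m^{3}-\frac{5}{2}m\sum_{i}d_{i}^{2}+\sum_{i}d_{i}^{3}+\sum_{(v_{i}v_{j})\in E(G)}d_{i}d_{j}$, the disjoint-edge count $\binom{m}{2}-\sum_{i}\binom{d_{i}}{2}=\frac{m^{2}+m}{2}-\frac{1}{2}\sum_{i}d_{i}^{2}$, the triangle term $-2\sum_{i}d_{i}c_{3}(G-v_{i})=-4mc_{3}(G)+2\sum_{i}d_{i}c_{3}(G_{v_{i}})$, and the $4$-cycle term $+2c_{4}(G)$, the five structure types collapse exactly to the stated expression for $p_{4}(G)$; the lower coefficients follow as you describe. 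One point worth making explicit in a final version: the matching in $(v)$ forces the interpretation of $c_{3}(G_{v_{i}})$ as the number of triangles of $G$ passing through $v_{i}$, which is the reading you adopt; with the opposite reading ($c_{3}$ of the deleted-vertex graph) the formula would be false, so this is not a cosmetic remark but a necessary disambiguation.
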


\begin{lemma}\label{art22}{\cite{liu}}
Let $G$ be a graph with $n$ vertices and $m$ edges, and let $(d_{1}, d_{2},..., d_{n})$
be the degree sequence of $G$. Suppose that $\pi(Q(G);x)=\sum\limits_{i}q_{i}(G)x^{n-i}$. Then\\
\noindent
$(i)$ $q_{0}(G)=1$;\\
$(ii)$ $q_{1}(G)=-2m$;\\
$(iii)$ $q_{2}(G)=2m^{2}+m-\frac{1}{2}\sum\limits_{i}d^{2}_{i}$; \\
$(iv)$ $q_{3}(G)=-\frac{1}{3}\sum\limits_{i}d^{3}_{i}+
(m+1)\sum\limits_{i}d^{2}_{i}-\frac{4}{3}m^3-2m^2-2c_{3}(G)$;\\
$(v)$ $q_{4}(G)=-\frac{1}{4}\sum\limits_{i}d^{4}_{i}+(\frac{2}{3}m+1)
\sum\limits_{i}d^{3}_{i}-\frac{1}{2}(2m^2+5m+1)\sum\limits_{i}d^{2}_{i}
+\frac{1}{8}(\sum\limits_{i}d^{2}_{i})^2+\sum\limits_{(v_{i}v_{j})\in
E(G)}d_{i}d_{j}-2\sum\limits_{i}d_{i}c_{3}(G_{v_{i}})+2c_{4}(G)+4mc_{3}(G)+\frac{2}{3}m^4+2m^3+\frac{1}{2}m^2
+\frac{1}{2}m$.
\end{lemma}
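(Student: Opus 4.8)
The plan is to compute the coefficients directly from the elementary expansion of a permanental characteristic polynomial in terms of principal permanental minors. For any $n\times n$ matrix $Y$ one has $\mathrm{per}(xI+Y)=\sum_{k=0}^{n}x^{n-k}\sum_{|S|=k}\mathrm{per}(Y[S])$, where $Y[S]$ is the principal submatrix on the index set $S$ and the empty permanent equals $1$. Taking $Y=-Q(G)$ and using $\mathrm{per}(-Q(G)[S])=(-1)^{|S|}\mathrm{per}(Q(G)[S])$, I obtain $q_k(G)=(-1)^k\sum_{|S|=k}\mathrm{per}(Q(G)[S])$. The whole task then reduces to evaluating, for $k=0,1,2,3,4$, the sum of all $k\times k$ principal permanental minors of $Q(G)=D(G)+A(G)$ and re-expressing it through the invariants $m$, the degree power sums $\sum_i d_i^j$, the edge sum $\sum_{v_iv_j\in E}d_i d_j$, the triangle counts $c_3(G)$ and $c_3(G_{v_i})$, and the $4$-cycle count $c_4(G)$.

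For small $k$ this is direct. The cases $k=0,1$ give $q_0=1$ and $q_1=-\sum_i d_i=-2m$. For $k=2$ each minor equals $\mathrm{per}(Q[\{i,j\}])=d_i d_j + a_{ij}^2 = d_i d_j + a_{ij}$, and summing over pairs, using $\sum_{i<j}d_i d_j = 2m^2-\tfrac12\sum_i d_i^2$ and $\sum_{i<j}a_{ij}=m$, gives (iii). For $k=3$ the permanent of the symmetric $3\times 3$ minor expands as $d_i d_j d_k + d_i a_{jk}+d_j a_{ik}+d_k a_{ij}+2a_{ij}a_{ik}a_{jk}$; here the elementary symmetric function $e_3$ of the degrees is rewritten through Newton's identities, the single-edge terms collapse to $2m^2-\sum d_i^2$ after using $\sum_{v_iv_j\in E}(d_i+d_j)=\sum_i d_i^2$, and the triple products count each triangle twice to produce $2c_3(G)$; the overall factor $(-1)^3$ then yields (iv).

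The crux is $k=4$. I classify the $24$ permutations of a $4$-set by cycle type and read off the subgraph each class detects: the identity contributes $e_4$ of the degrees; the six transpositions contribute ``one edge times the product of the two remaining degrees''; the three double transpositions contribute ``two vertex-disjoint edges''; the eight $3$-cycles contribute ``a triangle together with one free vertex''; and the six $4$-cycle permutations contribute ``a $4$-cycle''. Summing each class over all $4$-subsets and passing from local data to global invariants is the main bookkeeping: $e_4$ is rewritten by Newton's identities into $\sum d_i^4$, $(\sum d_i^2)^2$ and $\sum d_i^3$; the single-edge class yields $\sum_{v_iv_j\in E}d_i d_j$ after removing the two endpoints already used; the disjoint-edge class gives terms in $m^2$, $m$ and $\sum d_i^2$; the $3$-cycle class, after summing $2\sum_{l\notin T}d_l=2\bigl(2m-(d_i+d_j+d_k)\bigr)$ over all triangles $T=\{i,j,k\}$, produces exactly $4mc_3(G)-2\sum_i d_i c_3(G_{v_i})$; and the $4$-cycle class produces $2c_4(G)$. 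Since $q_4=(+1)\sum_{|S|=4}\mathrm{per}(Q[S])$, collecting these yields (v).

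I expect the main obstacle to be precisely this $k=4$ bookkeeping: correctly tracking the multiplicities with which each subgraph pattern is generated by the $24$ permutations, and, when a local sum ranges over a $4$-set, subtracting the contributions of the already-used vertices and edges so that the residual matches a clean global invariant. A structural check guides and verifies the computation: since $L(G)=D(G)-A(G)$ differs from $Q(G)$ only in the sign of the off-diagonal entries, a permutation term acquires the relative sign $(-1)^{s}$, where $s$ is its number of off-diagonal factors. Terms with an even number of off-diagonal factors (identity, transpositions, double transpositions, $4$-cycles) are identical in Lemma \ref{art21} and Lemma \ref{art22}, whereas the odd ones (the $3$-cycles) flip sign. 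This parity observation explains exactly why (i)--(iii) and the $c_4(G)$ term agree between the two lemmas while the $c_3(G)$ term in (iv) and the $\sum_i d_i c_3(G_{v_i})$ and $mc_3(G)$ terms in (v) reverse sign, and it provides a ready consistency test against the already-established Laplacian formulas.
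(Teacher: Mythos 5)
The paper offers no proof of this lemma for you to be compared against: Lemma \ref{art22} (like Lemma \ref{art21}) is quoted from Liu \cite{liu} and used as a black box, so your argument can only be judged on its own merits, and on those it stands up. The minor expansion $\mathrm{per}(xI+Y)=\sum_{k}x^{n-k}\sum_{|S|=k}\mathrm{per}(Y[S])$ is indeed valid for permanents (multilinearity in the columns, then expansion along the unit columns), giving $q_{k}(G)=(-1)^{k}\sum_{|S|=k}\mathrm{per}(Q(G)[S])$; your cycle-type census of each principal minor is the right one; and the signs come out as you claim: at $k=3$ the triangle class enters the minor sum as $+2c_{3}(G)$ and the prefactor $(-1)^{3}$ turns it into $-2c_{3}(G)$, while at $k=4$ the prefactor is $+1$ and the triangle-plus-free-vertex class equals $\sum_{T}2\bigl(2m-\sum_{v\in T}d_{v}\bigr)=4mc_{3}(G)-2\sum_{i}d_{i}c_{3}(G_{v_{i}})$, where the sum runs over triangles $T$ of $G$. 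Note that this forces $c_{3}(G_{v_{i}})$ to be read as the number of triangles of $G$ containing $v_{i}$, which is the reading you implicitly use (with the reading $c_{3}(G-v_{i})$ the sign of that term would be wrong, so this is worth stating explicitly). Your closing parity argument comparing $L(G)=D(G)-A(G)$ with $Q(G)$ is also correct and is a genuine consistency check, since only the $3$-cycle terms carry an odd number of off-diagonal factors. The one respect in which this is a plan rather than a finished proof is the $k=4$ collection, which you describe but do not execute; for completeness: the transposition class sums to $2m^{3}-\tfrac{5}{2}m\sum_{i}d_{i}^{2}+\sum_{i}d_{i}^{3}+\sum_{v_{i}v_{j}\in E(G)}d_{i}d_{j}$ (using $\sum_{v_{i}v_{j}\in E(G)}(d_{i}+d_{j})=\sum_{i}d_{i}^{2}$), the disjoint-edge class counts $\tfrac{1}{2}\bigl(m^{2}+m-\sum_{i}d_{i}^{2}\bigr)$, and Newton's identities for the degree sequence give $e_{4}=\tfrac{2}{3}m^{4}-m^{2}\sum_{i}d_{i}^{2}+\tfrac{1}{8}\bigl(\sum_{i}d_{i}^{2}\bigr)^{2}+\tfrac{2}{3}m\sum_{i}d_{i}^{3}-\tfrac{1}{4}\sum_{i}d_{i}^{4}$; adding these to $4mc_{3}(G)-2\sum_{i}d_{i}c_{3}(G_{v_{i}})+2c_{4}(G)$ reproduces item $(v)$ exactly, so your outline closes without surprises.
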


\begin{lemma}\label{art23}{\cite{liuwu2}}
The following can be deduced directly from Laplacian permanental polynomial of a graph $G$:\\
(i)The vertices of graph $G$;\\
(ii)The edges of graph $G$;\\
(iii)The sum of square of degrees of graph $G$;\\
(iv)$\sum\limits_{i}d^{3}_{i}-6c_{3}(G)$,  where $c_{3}(G)$ is the number of the triangles in $G$ and $d_{i}$ is the degree of the vertex $v_{i}$ of $G$.
\end{lemma}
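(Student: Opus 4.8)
The plan is to extract each invariant directly from the coefficients $p_0,p_1,p_2,p_3$ of $\pi(L(G);x)=\sum_i p_i(G)x^{n-i}$ by using the explicit formulas recorded in Lemma \ref{art21} and solving the resulting relations one after another. The key structural observation is that these formulas form a triangular system: once the lower-order coefficients have been used to pin down $m$ and $\sum_i d_i^2$, each higher coefficient introduces only one genuinely new quantity, so back-substitution recovers it. For (i), I would note that $L(G)$ is $n\times n$, hence $xI-L(G)$ is $n\times n$ and its permanent is a degree-$n$ polynomial in $x$ whose leading coefficient is $p_0(G)=1$; therefore the number of vertices $n$ is read off as the degree of $\pi(L(G);x)$.

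For (ii), the relation $p_1(G)=-2m$ gives $m=-\tfrac12 p_1(G)$ immediately, so the number of edges is determined. For (iii), with $m$ now known, I would solve $p_2(G)=2m^2+m-\tfrac12\sum_i d_i^2$ for the degree-square sum, obtaining $\sum_i d_i^2=4m^2+2m-2p_2(G)$. Both steps are exact linear inversions and involve no choices.

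The only point needing a little care is (iv). The coefficient $p_3(G)=-\tfrac13\sum_i d_i^3+(m+1)\sum_i d_i^2-\tfrac43 m^3-2m^2+2c_3(G)$ mixes the degree-cube sum with the triangle count, and these two quantities cannot be separated from $p_3$ alone. However, since $m$ and $\sum_i d_i^2$ are already in hand, I would isolate the term $-\tfrac13\sum_i d_i^3+2c_3(G)$ and rescale by $-3$, which yields
\[
\sum_i d_i^3-6c_3(G)=-3\Big(p_3(G)-(m+1)\sum_i d_i^2+\tfrac43 m^3+2m^2\Big).
\]
Thus exactly the combination $\sum_i d_i^3-6c_3(G)$, and not the two invariants individually, is recoverable, which is precisely what the statement asserts. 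I do not anticipate any real obstacle: granting Lemma \ref{art21}, the entire argument reduces to back-substitution through this triangular system, and the substantive difficulty resides wholly in the coefficient identities that are being assumed rather than in their inversion here.
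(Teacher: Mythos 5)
Your proposal is correct: the lemma follows exactly by inverting the triangular system of coefficient formulas in Lemma \ref{art21}, reading off $n$ from the degree, $m$ from $p_1$, $\sum_i d_i^2$ from $p_2$ once $m$ is known, and the combination $\sum_i d_i^3-6c_3(G)$ from $p_3$ by the rescaling you give. The paper cites this lemma from \cite{liuwu2} without reproducing a proof, but your back-substitution argument is precisely the intended derivation underlying that citation, so there is nothing to add.
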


\begin{lemma}\label{art24}{\cite{liuwu2}}
The following can be deduced directly from signless Laplacian permanental polynomial of a graph $G$:\\
(i)The vertices of graph $G$;\\
(ii)The edges of graph $G$;\\
(iii)The sum of square of degrees of graph $G$;\\
(iv)$\sum\limits_{i}d^{3}_{i}+6c_{3}(G)$,  where $c_{3}(G)$ is the number of the triangles in $G$ and $d_{i}$ is the degree of the vertex $v_{i}$ of $G$.
\end{lemma}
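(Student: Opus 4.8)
The plan is to invert the coefficient formulas of Lemma~\ref{art22} one at a time, exploiting the fact that they are triangular in the relevant graph invariants: each successive coefficient $q_i(G)$ introduces exactly one new invariant beyond those already recovered from $q_0,\dots,q_{i-1}$. Since $\pi(Q(G);x)={\rm per}(xI-Q(G))$ is the permanent of an $n\times n$ matrix whose diagonal entries are $x-d(v_i)$, its leading term in $x$ arises from the product $\prod_i\bigl(x-d(v_i)\bigr)$ of those diagonal entries, while every other term in the permanent expansion involves at least one off-diagonal entry and hence has lower degree. Thus $\pi(Q(G);x)$ is monic of degree $n$, and $n$, the number of vertices, is simply its degree, giving (i).

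For (ii), I would read the coefficient of $x^{n-1}$, namely $q_1(G)=-2m$ by Lemma~\ref{art22}(ii), and solve $m=-\tfrac12 q_1(G)$; thus the number of edges is recovered. For (iii), with $m$ now known, the coefficient $q_2(G)=2m^2+m-\tfrac12\sum_i d_i^2$ from Lemma~\ref{art22}(iii) contains a single remaining unknown, so I would solve $\sum_i d_i^2=4m^2+2m-2q_2(G)$, showing the sum of squares of degrees is determined.

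For (iv), I would use Lemma~\ref{art22}(iv): since $m$ and $\sum_i d_i^2$ have already been recovered, the expression $q_3(G)=-\tfrac13\sum_i d_i^3+(m+1)\sum_i d_i^2-\tfrac43 m^3-2m^2-2c_3(G)$ contains the two remaining unknowns $\sum_i d_i^3$ and $c_3(G)$ only through the single linear combination $-\tfrac13\sum_i d_i^3-2c_3(G)$. Solving for this combination and clearing denominators yields
\[
\sum_i d_i^3+6c_3(G)=3(m+1)\sum_i d_i^2-3q_3(G)-4m^3-6m^2,
\]
whose right-hand side is assembled entirely from quantities already shown to be recoverable; this proves (iv).

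There is no serious obstacle here, as the content of the lemma is precisely the solvability of the formulas in Lemma~\ref{art22} for the listed invariants. The one point worth flagging is that $\sum_i d_i^3$ and $c_3(G)$ cannot be disentangled at this level: they enter $q_3(G)$ only through the combination $\sum_i d_i^3+6c_3(G)$, which is exactly why the lemma records that combination rather than the two terms separately. (The analogous Laplacian statement, Lemma~\ref{art23}(iv), records $\sum_i d_i^3-6c_3(G)$ instead, the sign difference tracing back to the $\pm2c_3(G)$ discrepancy between $q_3(G)$ and $p_3(G)$ in Lemmas~\ref{art21} and~\ref{art22}.)
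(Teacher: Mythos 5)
Your proposal is correct and is essentially the standard argument: the paper itself quotes this lemma from \cite{liuwu2} without proof, and the intended derivation is exactly what you give, namely reading off $n$ as the degree of the monic polynomial and then solving the triangular system of coefficient formulas in Lemma~\ref{art22} for $m$, $\sum_i d_i^2$, and the combination $\sum_i d_i^3+6c_3(G)$ in turn. Your algebra checks out (indeed $\sum_i d_i^3+6c_3(G)=3(m+1)\sum_i d_i^2-3q_3(G)-4m^3-6m^2$), and your remark that $\sum_i d_i^3$ and $c_3(G)$ enter only through this single linear combination correctly explains why the lemma is stated in that form.
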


\begin{lemma}\label{art25}{\cite{liuwu1}}
 Let $v$ be a vertex of a graph $G$, $\mathscr{C}_{G}(v)$ the set of cycles of $G$ containing $v$ and $N(v)$ the set of vertices of $G$
adjacent to $v$. Then\\
$(a)$
$\pi(L(G);x)=(x-d(v))\pi(L_{v}(G))+\sum\limits_{u\in N(v)}\pi(L_{uv}(G))+2\sum\limits_{C\in \mathscr{C}_{G}(v)} \pi
(L_{V(C)}(G));$\\
$(b)$ $\pi(Q(G);x)=(x-d(v))\pi(Q_{v}(G))+\sum\limits_{u\in N(v)}\pi(Q_{uv}(G))+2\sum\limits_{C\in \mathscr{C}_{G}(v)}(-1)^{\mid V(C)\mid} \pi
(Q_{V(C)}(G)).$
\end{lemma}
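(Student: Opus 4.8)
The plan is to prove both identities directly from the permutation definition of the permanent, organizing the sum over permutations according to how the distinguished vertex $v$ is moved. Throughout, write $B^{L}=xI-L(G)$ and $B^{Q}=xI-Q(G)$, so that $\pi(L(G);x)=\mathrm{per}(B^{L})$ and $\pi(Q(G);x)=\mathrm{per}(B^{Q})$, and let $\pi(L_{S}(G))$ (resp. $\pi(Q_{S}(G))$) denote the permanent of the principal submatrix of $B^{L}$ (resp. $B^{Q}$) obtained by deleting all rows and columns indexed by the vertices in $S$. (It is this principal-submatrix reading that is needed: since deleting $v$ changes the degrees of its neighbours, $L_{v}(G)$ is not $L(G-v)$, so the diagonal degree data must be inherited from $G$.) The essential observation is that the diagonal entry at each vertex is $x-d(v_{i})$ in both matrices, while the off-diagonal entry of an adjacent pair is $+1$ in $B^{L}$ and $-1$ in $B^{Q}$, and $0$ for a nonadjacent pair.

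First I would recall that $\mathrm{per}(B)=\sum_{\sigma}\prod_{i}B_{i\sigma(i)}$, and that each permutation $\sigma$ decomposes uniquely into disjoint cycles. A term is nonzero only when every nontrivial cycle of $\sigma$ runs along edges of $G$; since the entries inside such a cycle are distinct vertices, a cycle of length $2$ is exactly an edge of $G$, and a cycle of length $k\geq3$ is exactly a graph cycle of $G$ together with a choice of traversal direction. I would then partition all $\sigma$ by the single cycle of $\sigma$ containing $v$, splitting into three cases by its length.

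The three cases produce the three summands. (i) If $v$ is a fixed point, the factor $B_{vv}=x-d(v)$ is pulled out and the remaining permutations of $V(G)\setminus\{v\}$ sum to $\pi(L_{v}(G))$ (resp. $\pi(Q_{v}(G))$). (ii) If $v$ lies in a $2$-cycle $(v\,u)$, this forces $u\in N(v)$, contributes $B_{vu}B_{uv}=1$ in both matrices, and the rest sums to $\pi(L_{uv}(G))$ (resp. $\pi(Q_{uv}(G))$), summed over $u\in N(v)$. (iii) If $v$ lies in a cycle of length $k\geq3$, it is a cycle $C\in\mathscr{C}_{G}(v)$ equipped with one of its two orientations, each orientation contributing the product of off-diagonal entries along $C$, namely $1$ for $B^{L}$ and $(-1)^{|V(C)|}$ for $B^{Q}$, with the rest summing to $\pi(L_{V(C)}(G))$ (resp. $\pi(Q_{V(C)}(G))$). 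Summing the three cases yields (a) and (b).

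The one point needing care, and the only place where (a) and (b) diverge, is the bookkeeping of the factor $2$ and the sign in case (iii). The factor $2$ appears because a graph cycle of length $\geq3$ corresponds to exactly two distinct permutation cycles, its two orientations $(v\,i_{1}\cdots i_{k-1})$ and $(v\,i_{k-1}\cdots i_{1})$, whereas a graph edge corresponds to only the single transposition $(v\,u)$, so no factor $2$ may be inserted in case (ii). The sign is then immediate: each off-diagonal factor is $+1$ in $B^{L}$, giving $2$, while each is $-1$ in $B^{Q}$, giving $2(-1)^{k}=2(-1)^{|V(C)|}$. Verifying this orientation count and sign tally is the main obstacle; the rest is routine regrouping of the permutation sum.
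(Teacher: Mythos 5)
Your proof is correct. Note that the paper itself offers no proof of this lemma---it is quoted verbatim from the reference [Liu--Wu, \emph{Appl.\ Math.\ Comput.} 304 (2017)]---so the only comparison available is with the standard argument there, and yours is essentially that argument: expand $\mathrm{per}(xI-L(G))$ (resp.\ $\mathrm{per}(xI-Q(G))$) over permutations, classify by the cycle of $\sigma$ containing $v$ (fixed point, transposition along an edge, or a graph cycle through $v$ with one of two orientations), and track the off-diagonal entries $+1$ versus $-1$, which is exactly where the factor $2$ and the sign $(-1)^{|V(C)|}$ in part $(b)$ come from. You also correctly handle the one notational trap, reading $L_{v}(G)$ as a principal submatrix inheriting the degrees of $G$ rather than as $L(G-v)$, and you rightly observe that a $2$-cycle has only one orientation so no factor $2$ arises in the edge term; nothing further is needed.
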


\begin{lemma}\label{art26}{\cite{liuwu1}}
Let $G$ and $H$ be vertex-disjoint graphs. Let $G\cdot H$ be the coalescence obtained from $G$ and $H$ by identifing a vertex $u$ of $G$ with a vertex $v$ of $H$. Then\\
$(a)$
$\pi(L(G\cdot H))=\pi(L(G))\pi(L_{v}(H))+\pi(L(H))\pi(L_{u}(G)-x\pi(L_{v}(H))\pi(L_{u}(G);$\\
$(b)$ $\pi(Q(G\cdot H))=\pi(Q(G))\pi(Q_{v}(H))+\pi(Q(H))\pi(Q_{u}(G)-x\pi(Q_{v}(H))\pi(Q_{u}(G).$
\end{lemma}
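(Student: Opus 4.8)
The plan is to compute $\operatorname{per}(xI-L(G\cdot H))$ directly from the block structure of the coalescence, using nothing beyond the multilinearity of the permanent in its rows. Writing $w$ for the vertex obtained by identifying $u\in V(G)$ with $v\in V(H)$, I would order the vertices of $G\cdot H$ as $V(G)\setminus\{u\}$, then $w$, then $V(H)\setminus\{v\}$. Because $G$ and $H$ meet only at $w$, there are no edges between $V(G)\setminus\{u\}$ and $V(H)\setminus\{v\}$, so $M:=xI-L(G\cdot H)$ takes the block form
\[ M=\begin{pmatrix} P & \mathbf a & 0\\ \mathbf a^{\top} & x-d_G(u)-d_H(v) & \mathbf b^{\top}\\ 0 & \mathbf b & R\end{pmatrix}, \]
where $P$ is the principal submatrix of $xI-L(G)$ indexed by $V(G)\setminus\{u\}$, so $\operatorname{per}P=\pi(L_u(G))$, and $R$ is the principal submatrix of $xI-L(H)$ indexed by $V(H)\setminus\{v\}$, so $\operatorname{per}R=\pi(L_v(H))$; the vectors $\mathbf a,\mathbf b$ record the $G$- and $H$-adjacencies of the identified vertex. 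The point to check here is that every vertex other than $w$ keeps in $G\cdot H$ the degree it had in its own factor, so that $P$ and $R$ really are these submatrices.

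The key step would be to split the central diagonal entry as $x-d_G(u)-d_H(v)=\bigl(x-d_G(u)\bigr)+\bigl(-d_H(v)\bigr)$ and to break the $w$-row of $M$ into the two rows $\bigl(\mathbf a^{\top},\,x-d_G(u),\,\mathbf 0\bigr)$ and $\bigl(\mathbf 0,\,-d_H(v),\,\mathbf b^{\top}\bigr)$. By multilinearity in the $w$-row this gives $\operatorname{per}M=\operatorname{per}M_{1}+\operatorname{per}M_{2}$, where $M_{1},M_{2}$ agree with $M$ off that row. In $M_{1}$ the columns indexed by $V(H)\setminus\{v\}$ are zero outside the $R$-block, so $M_{1}$ is block triangular and $\operatorname{per}M_{1}=\operatorname{per}(R)\,\operatorname{per}(xI-L(G))$, the top-left block being exactly $xI-L(G)$ with the restored diagonal entry $x-d_G(u)$ at $w$. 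Symmetrically, in $M_{2}$ the columns indexed by $V(G)\setminus\{u\}$ vanish outside the $P$-block, so $\operatorname{per}M_{2}=\operatorname{per}(P)\,\operatorname{per}(R')$, where $R'$ is $xI-L(H)$ with its $(v,v)$-entry lowered from $x-d_H(v)$ to $-d_H(v)$.

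To finish I would evaluate $\operatorname{per}(R')$. Since $R'$ differs from $xI-L(H)$ only by subtracting $x$ in the single position $(v,v)$, one further use of multilinearity in the $v$-row yields $\operatorname{per}(R')=\pi(L(H))-x\,\pi(L_v(H))$. Substituting gives
\[ \operatorname{per}M=\pi(L(G))\,\pi(L_v(H))+\pi(L(H))\,\pi(L_u(G))-x\,\pi(L_v(H))\,\pi(L_u(G)), \]
which is statement $(a)$. For $(b)$ I expect the argument to carry over verbatim: replacing $L$ by $Q$ only changes the off-diagonal entries of $M$ from $+A$ to $-A$, which affects neither the vanishing of the corner blocks nor either of the factorizations, so the same computation yields the signless identity.

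The part I expect to require the most care is the asymmetric bookkeeping that produces the correction term $-x\,\pi(L_v(H))\pi(L_u(G))$: the split has to be arranged so that one summand completes the diagonal at $w$ to the full value $x-d_G(u)$ and thereby rebuilds $\pi(L(G))$, while the other is deliberately left defective at $(v,v)$, and it is precisely the missing $x$ there, extracted in the last multilinear step, that accounts for the $-x$ in the formula. I would also want to verify explicitly the elementary fact that a block matrix with a zero off-diagonal block has permanent equal to the product of the permanents of its diagonal blocks, and to confirm that the hypothesis $V(G)\cap V(H)=\{w\}$ is exactly what forces the corner blocks of $M$ to be zero. As a cross-check, the same identity should also fall out of applying Lemma~\ref{art25} at $w$ in $G\cdot H$ and observing that the neighbours of $w$ and the cycles through $w$ split into a $G$-part and an $H$-part, each contributing the opposite factor's deletion-permanent; collecting terms reproduces $(a)$ and $(b)$ once the degree contributions cancel.
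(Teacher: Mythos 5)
Your proof is correct, but there is nothing in the paper to compare it against: the paper does not prove this lemma at all, it simply quotes it from \cite{liuwu1}. So your argument stands as an independent, self-contained verification, and its two load-bearing facts are both genuine properties of the permanent. First, the permanent is linear in each row separately, which justifies both the splitting of the $w$-row and the final extraction of $-x\,\pi(L_v(H))$ from the matrix $R'$ whose $(v,v)$-entry was left as $-d_H(v)$. Second, if a square matrix has the block form $\begin{pmatrix} A & B\\ 0 & C\end{pmatrix}$ with $A$ and $C$ square, then $\mathrm{per}=\mathrm{per}(A)\,\mathrm{per}(C)$: unlike most determinant identities this one does survive the passage to permanents, because the zero block forces every permutation with nonzero contribution to map the rows of $C$ bijectively onto the columns of $C$, so every contributing permutation splits. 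Your $M_1$ and $M_2$ have exactly this shape once the indices are grouped as $\bigl(V(G)\setminus\{u\}\bigr)\cup\{w\}$ versus $V(H)\setminus\{v\}$ for $M_1$, and $V(G)\setminus\{u\}$ versus $\{w\}\cup\bigl(V(H)\setminus\{v\}\bigr)$ for $M_2$, which legitimizes $\mathrm{per}(M_1)=\pi(L(G))\,\pi(L_v(H))$ and $\mathrm{per}(M_2)=\pi(L_u(G))\,\mathrm{per}(R')$; your observation that all vertices other than $w$ keep their original degrees, so that $P=xI-L_u(G)$ and $R=xI-L_v(H)$, is the correct and necessary bookkeeping. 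The claim that case $(b)$ follows verbatim is also right, since passing from $L$ to $Q$ only flips the signs of off-diagonal entries and affects neither the zero blocks nor the diagonal split. One small caution: the cross-check you suggest via Lemma \ref{art25} would not be entirely ``verbatim'' in the signless case, because there the cycle terms carry the extra factors $(-1)^{|V(C)|}$; this does not touch your main argument, which never uses that lemma.
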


Let $B_{n}$ be the matrix of order $n$ obtained from $L(P_{n+1})$ by deleting the row and column corresponding to one end of
$P_{n+1}$, $U_{n}$ be the matrix of order $n$ obtained from $L(P_{n+2})$ by deleting the rows and columns corresponding to the two end vertices of $P_{n+2}$.

\begin{lemma}\label{art27}{\cite{liuwu1}}
Let $y$ be the root of the equation $y^{2}-(x-2)y-1=0$. Then\\
$(a)\pi(L(P_{n});y)=\frac{y^{2n}(y+1)^{2}+(-1)^{n}(y-1)^{2}}{y^{n}(y^{2}+1)},(n\geq 1);$\\
$(b)\pi(B_{n};y)=\frac{y^{2n+1}(y+1)-(-1)^{n}(y-1)}{y^{n}(y^{2}+1)},(n\geq 1);$\\
$(c)\pi(U_{n};y)=\frac{y^{2n+2}+(-1)^{n}}{y^{n}(y^{2}+1)},(n\geq 1);$\\
$(d)\pi(L(C_{n});y)=\frac{y^{2n}+(-1)^{n}}{y^{n}}+2,(n\geq 3).$
\end{lemma}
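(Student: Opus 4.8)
The plan is to derive all four identities from one structural observation: each of $L(P_n)$, $B_n$ and $U_n$ is tridiagonal, and since the off-diagonal entries of a Laplacian equal $-1$, the off-diagonal entries of $xI-L(P_n)$ (and likewise of $xI-B_n$ and $xI-U_n$) are all equal to $1$. For a tridiagonal matrix $M_n$ of order $n$ whose consecutive off-diagonal entries multiply to $1$, classifying the nonzero terms of the permanent according to whether the last index is a fixed point of the underlying permutation or is transposed with its predecessor yields the three-term recurrence $\operatorname{per}(M_n)=(M_n)_{nn}\operatorname{per}(M_{n-1})+\operatorname{per}(M_{n-2})$, where $M_k$ denotes the leading principal submatrix of order $k$. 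This is the permanental analogue of the continuant recurrence for tridiagonal determinants, differing only in the sign of the last term.

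First I would specialize this recurrence. Since $U_n$ has constant diagonal $x-2$ and $B_n$ has diagonal $(x-1,x-2,\dots,x-2)$, I obtain $\pi(U_n;x)=(x-2)\pi(U_{n-1};x)+\pi(U_{n-2};x)$ and $\pi(B_n;x)=(x-2)\pi(B_{n-1};x)+\pi(B_{n-2};x)$. The last diagonal entry of $xI-L(P_n)$ is $x-1$ while its leading submatrices are exactly the $B$-matrices, so the same expansion gives $\pi(L(P_n);x)=(x-1)\pi(B_{n-1};x)+\pi(B_{n-2};x)$; feeding the $B$-recurrence into this shows that $\pi(L(P_n);x)$ satisfies the identical recurrence $f_n=(x-2)f_{n-1}+f_{n-2}$. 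Under the substitution $x-2=y-y^{-1}$ forced by $y^2-(x-2)y-1=0$, the characteristic polynomial $t^2-(x-2)t-1$ factors as $(t-y)(t+y^{-1})$, so every solution has the closed form $\alpha y^{n}+\beta(-1)^{n}y^{-n}$. I would then determine $\alpha,\beta$ for each of the three sequences by matching two initial values, e.g. $\pi(L(P_1);x)=x$, $\pi(B_1;x)=x-1$, $\pi(U_1;x)=x-2$ together with the order-two cases, and simplify using $y-y^{-1}=x-2$ to recover precisely the rational expressions in (a), (b) and (c).

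For the cyclic identity (d) I would instead apply the vertex-expansion formula of Lemma \ref{art25}(a) at any vertex $v$ of $C_n$, noting $d(v)=2$. Deleting $v$ leaves a path of $n-1$ vertices all of $C_n$-degree $2$, so $\pi(L_v(C_n))=\pi(U_{n-1};x)$; deleting $v$ together with either of its two neighbours leaves a path of $n-2$ such vertices, contributing $2\pi(U_{n-2};x)$; and the only cycle through $v$ is $C_n$ itself, whose full deletion gives the empty permanent $1$ and hence the constant $2$. This produces $\pi(L(C_n);x)=(x-2)\pi(U_{n-1};x)+2\pi(U_{n-2};x)+2$, into which I substitute the formula (c) already established.

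The part needing the most care is not the recurrence theory but the bookkeeping at the boundary. One must verify that the leading submatrices arising in the permanent expansion of $\pi(L(P_n);x)$ really are the $B$-matrices, i.e. that the truncated endpoint retains degree $2$ and thus diagonal entry $x-2$, and one must check the $n=1$ instances of (a)--(c) directly, since the degree pattern degenerates there. The remaining work — confirming that the matched constants reproduce the exact numerators in (a)--(c) and that the three contributions in the cycle expansion combine to the claimed form in (d) — is a routine algebraic simplification.
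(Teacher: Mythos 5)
Your proposal is correct: the permanental analogue of the continuant recurrence, $\mathrm{per}(M_n)=(M_n)_{nn}\mathrm{per}(M_{n-1})+\mathrm{per}(M_{n-2})$, does hold for $xI-L(P_n)$, $xI-B_n$, $xI-U_n$ because their off-diagonal entries are all $+1$; the substitution $x-2=y-y^{-1}$ factors $t^2-(x-2)t-1$ as $(t-y)(t+y^{-1})$, so matching the initial values $n=1,2$ determines the closed forms in $(a)$--$(c)$; and the vertex expansion of Lemma \ref{art25}$(a)$ at a vertex of $C_n$ gives $\pi(L(C_n);x)=(x-2)\pi(U_{n-1})+2\pi(U_{n-2})+2$, which together with $(c)$ (or with the recurrence, via $\pi(U_n)+\pi(U_{n-2})+2$) yields $(d)$. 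One point of context: this paper never proves Lemma \ref{art27} itself --- it is imported from \cite{liuwu1} --- and the argument you give (three-term recurrences for tridiagonal permanents solved by the characteristic roots $y$ and $-y^{-1}$, plus vertex expansion for the cycle) is essentially the technique of that cited source, so there is no genuine methodological divergence to report.
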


\begin{lemma}\label{art28}{\cite{liuwu2}}
 Suppose $y=1$, we have\\
$(a)\pi(L(P_{n});y=1)=2,(n\geq 1);$\\
$(b)\pi(B_{n};y=1)=1;$\\
$(c)\pi(U_{n};y=1)=\frac{1+(-1)^{n}}{2},(n\geq 1);$\\
$(d)\pi(L(C_{n});y=1)=(-1)^{n}+3,(n\geq 3).$
\end{lemma}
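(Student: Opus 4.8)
The plan is to obtain all four identities by direct substitution of $y=1$ into the closed-form expressions furnished by Lemma \ref{art27}. The first point to settle is that $y=1$ is an admissible evaluation point: the common denominator appearing in parts $(a)$–$(c)$, namely $y^{n}(y^{2}+1)$, equals $1\cdot 2=2\neq 0$ at $y=1$, and the denominator $y^{n}$ of part $(d)$ equals $1$. Hence the rational functions of Lemma \ref{art27} are regular at $y=1$, and no limiting or L'Hôpital argument is needed; one simply evaluates.

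Carrying this out termwise: for part $(a)$, the factor $(y-1)^{2}$ in the numerator vanishes at $y=1$, leaving $y^{2n}(y+1)^{2}=1\cdot 4=4$ over the denominator $2$, which gives $2$. For part $(b)$, the term $(-1)^{n}(y-1)$ likewise vanishes, so the numerator reduces to $y^{2n+1}(y+1)=1\cdot 2=2$, and division by $2$ yields $1$. For part $(c)$, the numerator $y^{2n+2}+(-1)^{n}$ becomes $1+(-1)^{n}$, and dividing by the denominator $2$ produces $\frac{1+(-1)^{n}}{2}$. For part $(d)$, the expression $\frac{y^{2n}+(-1)^{n}}{y^{n}}+2$ collapses to $(1+(-1)^{n})+2=(-1)^{n}+3$. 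I would present these four evaluations in the order $(a)$ through $(d)$ exactly as above.

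There is no genuine obstacle here: the statement is a specialization of Lemma \ref{art27}, and the only subtlety worth recording is the verification that $y=1$ leaves every denominator nonzero, so that substitution is legitimate. It is also worth remarking, as a sanity check and for later use, that via the defining relation $y^{2}-(x-2)y-1=0$ the choice $y=1$ corresponds precisely to the evaluation $x=2$ of the underlying permanental polynomials, so these identities record the values $\pi(L(P_{n});2)$, $\pi(B_{n};2)$, $\pi(U_{n};2)$ and $\pi(L(C_{n});2)$.
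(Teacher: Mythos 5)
Your proposal is correct, and it is exactly the intended derivation: the paper states this lemma as a known result cited from \cite{liuwu2}, where it is nothing more than the specialization $y=1$ (equivalently $x=2$) of the closed forms in Lemma \ref{art27}, which is precisely the substitution you carried out, including the observation that all denominators are nonzero at $y=1$. No further comment is needed.
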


Let $B'_{n}$ be the matrix of order $n$ obtained from $Q(P_{n+1})$ by deleting the row and column corresponding to one end of
$P_{n+1}$, $U'_{n}$ be the matrix of order $n$ obtained from $Q(P_{n+2})$ by deleting the rows and columns corresponding to the two end vertices of $P_{n+2}$.

\begin{lemma}\label{art29}{\cite{liuwu1}}
Let $y$ be the root of the equation $y^{2}-(x-2)y-1=0$. Then\\
$(a)\pi(Q(P_{n});y)=\frac{y^{2n}(y+1)^{2}+(-1)^{n}(y-1)^{2}}{y^{n}(y^{2}+1)},(n\geq 1);$\\
$(b)\pi(B'_{n};y)=\frac{y^{2n+1}(y+1)-(-1)^{n}(y-1)}{y^{n}(y^{2}+1)},(n\geq 1);$\\
$(c)\pi(U'_{n};y)=\frac{y^{2n+2}+(-1)^{n}}{y^{n}(y^{2}+1)},(n\geq 1);$\\
$(d)\pi(Q(C_{n});y)=\frac{y^{2n}+(-1)^{n}}{y^{n}}+2(-1)^{n},(n\geq 3).$
\end{lemma}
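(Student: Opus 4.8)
The plan is to split the four identities into two groups. Parts (a)--(c) concern objects built from the path $P_n$, which is bipartite, so I expect them to follow from the already-established Laplacian versions in Lemma~\ref{art27} together with the bipartite coincidence $\pi(Q(G);x)=\pi(L(G);x)$ of \cite{far}. Part (d) is the genuinely new computation, since $C_n$ fails to be bipartite for odd $n$, which is exactly why the constant $+2$ of Lemma~\ref{art27}(d) must be replaced by $2(-1)^n$ here.

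For (a), since $P_n$ is a tree it is bipartite, so $\pi(Q(P_n);x)=\pi(L(P_n);x)$ and the formula is identical to Lemma~\ref{art27}(a). For (b) and (c) I would make the bipartite coincidence explicit at the matrix level: if $S$ is the diagonal $\pm1$ matrix recording the two colour classes of $P_{n+1}$ (resp.\ $P_{n+2}$), then $SQ(P_{n+1})S=L(P_{n+1})$. Deleting the rows and columns of one end (resp.\ both ends) commutes with this diagonal conjugation, so $B_n=S'B'_nS'$ and $U_n=S''U'_nS''$ for the restricted sign matrices $S',S''$. Since $\mathrm{per}(S'MS')=\bigl(\prod_i S'_{ii}\bigr)^2\,\mathrm{per}(M)=\mathrm{per}(M)$, we obtain $\pi(B'_n;y)=\pi(B_n;y)$ and $\pi(U'_n;y)=\pi(U_n;y)$, and (b), (c) drop out of Lemma~\ref{art27}(b), (c).

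For (d) I would expand $\pi(Q(C_n);x)$ by Lemma~\ref{art25}(b) at any vertex $v$ of $C_n$. Here $d(v)=2$, $v$ has two neighbours $u_1,u_2$, and the only cycle through $v$ is $C_n$ itself with $|V(C_n)|=n$. The crucial observation is the \emph{inheritance of diagonal degrees}: deleting $v$ from $Q(C_n)$ leaves a tridiagonal matrix of order $n-1$ whose diagonal is constantly $2$ (the former neighbours of $v$ keep their degree $2$), which is precisely $U'_{n-1}$; likewise deleting $v$ together with either neighbour gives $U'_{n-2}$, and deleting all of $V(C_n)$ leaves the empty matrix with permanental polynomial $1$. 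Lemma~\ref{art25}(b) therefore collapses to
\begin{equation*}
\pi(Q(C_n);y)=(x-2)\,\pi(U'_{n-1};y)+2\,\pi(U'_{n-2};y)+2(-1)^n .
\end{equation*}
Substituting $x-2=(y^2-1)/y$ from the defining equation $y^2-(x-2)y-1=0$ together with the closed form of part (c), the first two terms combine over the common denominator $y^n(y^2+1)$ into a single fraction whose numerator factors as $(y^2+1)\bigl(y^{2n}+(-1)^n\bigr)$; the factor $y^2+1$ cancels and the additive $2(-1)^n$ survives, yielding the claimed $\tfrac{y^{2n}+(-1)^n}{y^n}+2(-1)^n$.

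I expect the only real pitfalls to be bookkeeping rather than depth: correctly reading off that the deleted-vertex submatrices of $Q(C_n)$ are the degree-$2$ tridiagonal matrices $U'_{n-1}$ and $U'_{n-2}$ (and not Laplacian path matrices, whose end diagonals would be $1$), and tracking the parity factor $(-1)^{|V(C)|}=(-1)^n$ in the cycle term of Lemma~\ref{art25}(b), which is exactly the source of the discrepancy with the Laplacian case. Once these are pinned down the simplification is a routine cancellation, and a sanity check at $n=3$, where the permanent of the $3\times3$ matrix $xI-Q(C_3)$ equals $(x-2)^3+3(x-2)-2$, confirms both the formula and the signs.
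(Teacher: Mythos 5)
Your proposal is correct, but there is nothing in the paper to compare it against: Lemma \ref{art29} is not proved here at all — it is imported with a citation to \cite{liuwu1}, just like Lemmas \ref{art25}--\ref{art28} and \ref{art210}. What you have written is therefore a genuinely self-contained derivation using only the other quoted lemmas, and it holds up. For (a)--(c), the signature argument is sound: with $S$ the $\pm1$ diagonal matrix of the bipartition, $SQ(P_m)S=L(P_m)$, diagonal conjugation commutes with deleting rows and columns, and $\mathrm{per}(S'MS')=\bigl(\prod_i S'_{ii}\bigr)^2\mathrm{per}(M)=\mathrm{per}(M)$, so $\pi(Q(P_n);x)=\pi(L(P_n);x)$, $\pi(B'_n;x)=\pi(B_n;x)$, $\pi(U'_n;x)=\pi(U_n;x)$ and Lemma \ref{art27}(a)--(c) transfer verbatim; note this actually re-proves Faria's bipartite coincidence at the level of principal submatrices rather than merely invoking it, which is necessary since $B'_n$ and $U'_n$ are submatrices and not themselves signless Laplacians of graphs. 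For (d), your reading of Lemma \ref{art25}(b) is the right one: $Q_v(C_n)=U'_{n-1}$ and $Q_{uv}(C_n)=U'_{n-2}$ because the principal submatrices retain the diagonal degrees computed in $C_n$, and the cycle term is $2(-1)^n\cdot 1$ since deleting $V(C_n)$ leaves the empty matrix. The algebra closes as you claim:
\begin{equation*}
(x-2)\pi(U'_{n-1};y)+2\pi(U'_{n-2};y)
=\frac{(y^2-1)\bigl(y^{2n}-(-1)^n\bigr)+2y^{2n}+2(-1)^n y^2}{y^{n}(y^{2}+1)}
=\frac{y^{2n}+(-1)^{n}}{y^{n}},
\end{equation*}
so adding $2(-1)^n$ gives (d), and your $n=3$ sanity check $(x-2)^3+3(x-2)-2$ is also right. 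The one convention your argument silently relies on — that $Q_v(G)$, $Q_{uv}(G)$, $Q_{V(C)}(G)$ in Lemma \ref{art25} denote principal submatrices of $Q(G)$ and that the empty matrix has permanental polynomial $1$ — is indeed the usage of \cite{liuwu1}, so the proof stands as written.
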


\begin{lemma}\label{art210}{\cite{liuwu2}}
 Suppose $y=1$, we have\\
$(a)\pi(Q(P_{n});y=1)=2,(n\geq 1);$\\
$(b)\pi(B'_{n};y=1)=1;$\\
$(c)\pi(U'_{n};y=1)=\frac{1+(-1)^{n}}{2},(n\geq 1);$\\
$(d)\pi(Q(C_{n});y=1)=3(-1)^{n}+1,(n\geq 3).$
\end{lemma}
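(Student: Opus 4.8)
The plan is to obtain all four identities by specializing, at $y=1$, the closed-form rational expressions already recorded in Lemma \ref{art29}. The first step is to identify which value of $x$ the choice $y=1$ encodes. Substituting $y=1$ into the defining quadratic $y^{2}-(x-2)y-1=0$ gives $1-(x-2)-1=0$, hence $x=2$; equivalently, at $x=2$ the quadratic becomes $y^{2}-1=0$ with roots $y=\pm 1$. Thus evaluating the polynomials of Lemma \ref{art29} at $y=1$ is exactly evaluating the associated signless Laplacian permanental polynomials at $x=2$.

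Next I would check regularity, so that the substitution is a literal evaluation rather than a limit. The denominators appearing in Lemma \ref{art29} are $y^{n}(y^{2}+1)$ in parts (a)--(c) and $y^{n}$ in part (d); at $y=1$ these equal $2$ and $1$ respectively, so nothing vanishes and no removable singularity must be resolved. With that in hand the four evaluations are immediate: in (a) the numerator collapses to $1\cdot(1+1)^{2}+(-1)^{n}(1-1)^{2}=4$ over $2$, giving $2$; in (b) the numerator is $1\cdot(1+1)-(-1)^{n}(1-1)=2$ over $2$, giving $1$; in (c) the numerator is $1+(-1)^{n}$ over $2$; and in (d) one gets $\tfrac{1+(-1)^{n}}{1}+2(-1)^{n}=1+3(-1)^{n}$. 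These are precisely the asserted values.

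The only point that warrants any care --- and hence the ``main obstacle,'' modest as it is --- is well-definedness: since $x=2$ corresponds to the two roots $y=1$ and $y=-1$ of the quadratic, the specialization should not depend on which root is used. I would therefore finish with a consistency check, feeding $y=-1$ into the same four formulas and confirming identical outputs; for example in (d) this yields $\tfrac{1+(-1)^{n}}{(-1)^{n}}+2(-1)^{n}=(-1)^{n}+1+2(-1)^{n}=1+3(-1)^{n}$, using $(-1)^{-n}=(-1)^{n}$. Everything beyond this check is routine arithmetic, so no separate induction or structural argument is needed; the content of the lemma lives entirely in the formulas of Lemma \ref{art29}.
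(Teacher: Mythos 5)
Your proposal is correct: the paper itself imports this lemma from \cite{liuwu2} without proof, and your derivation --- checking that $y=1$ corresponds to $x=2$, noting the denominators $y^{n}(y^{2}+1)$ and $y^{n}$ do not vanish at $y=1$, and then evaluating the four formulas of Lemma \ref{art29} directly --- is exactly the evident specialization that underlies the cited result, with all four values matching. The extra consistency check at $y=-1$ is a nice touch but not logically required, since the lemma only asserts the values obtained from the formulas of Lemma \ref{art29} at $y=1$.
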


Suppose that $G$ is a graph with vertex set $V(G)=\{v_{1}, v_{2},...,v_{n}\}.$ Let $K \subseteq \{1,...,n\}$ and denote by $H(K)$ the set of all subgraphs $H$ of $G$ having both of the following properties:

(1) The  vertices of $H$ are $\{v_{i}: i\in K\};$

(2) Each connected component of $H$ is either an edge or a cycle. For $H\in H(K),$ let $c(H)$ denote the number of components of $H$ which are cycles.

\begin{lemma}\label{art211}{\cite{bru}}
Let $G$ be a graph on $n$ vertices. If the degree sequence of $G$ is $(d_{1}, d_{2},...,d_{n}),$ then
$$per(L(G))=\sum\limits_{K\subseteq
\{1,...,n\}}\sum\limits_{H\in H(K)}(-1)^{|K|}2^{c(H)}\prod \limits_{i \notin K}d_{i}.$$
\end{lemma}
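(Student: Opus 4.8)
The plan is to expand $\mathrm{per}(L(G))$ directly from the definition of the permanent and to organize the resulting sum according to the cycle structure of the contributing permutations. Writing $L(G)=(\ell_{ij})$, we have $\ell_{ii}=d_i$ and, for $i\neq j$, $\ell_{ij}=-1$ if $v_iv_j\in E(G)$ and $\ell_{ij}=0$ otherwise. Hence $\mathrm{per}(L(G))=\sum_{\sigma}\prod_{i=1}^{n}\ell_{i\sigma(i)}$, and a permutation $\sigma$ contributes a nonzero summand precisely when every index $i$ with $\sigma(i)\neq i$ satisfies $v_iv_{\sigma(i)}\in E(G)$; that is, $\sigma$ moves indices only along edges of $G$.

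First I would fix the support $K=\{i:\sigma(i)\neq i\}$ and split the product into the diagonal part $\prod_{i\notin K}d_i$ coming from the fixed points and the off-diagonal part coming from $K$. Each moved index contributes exactly one factor $\ell_{i\sigma(i)}=-1$, so the off-diagonal part equals $(-1)^{|K|}$ regardless of the finer structure of $\sigma$ on $K$. Summing over all contributing $\sigma$ with a given support $K$ therefore yields $(-1)^{|K|}\bigl(\prod_{i\notin K}d_i\bigr)\,N(K)$, where $N(K)$ denotes the number of fixed-point-free permutations of $\{v_i:i\in K\}$ that move only along edges of $G$.

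The heart of the argument is to show $N(K)=\sum_{H\in H(K)}2^{c(H)}$. For this I would use the cycle decomposition of such a $\sigma$: each of its cycles is either a transposition $(i\,j)$, which forces $v_iv_j\in E(G)$, or a cycle $(i_1\,i_2\cdots i_k)$ with $k\geq 3$, which traces out a cycle $v_{i_1}v_{i_2}\cdots v_{i_k}v_{i_1}$ in $G$. Collecting the edges and cycles so produced gives a subgraph $H\in H(K)$ whose components are edges and cycles and which covers $K$ exactly. Conversely, I would recover the permutations from $H$: a transposition is the unique fixed-point-free permutation of an edge, whereas a cycle of length at least $3$ can be oriented in two ways, each giving a distinct permutation cycle. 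Thus every $H\in H(K)$ arises from exactly $2^{c(H)}$ permutations, which establishes the count.

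Combining the two displays and summing over all $K\subseteq\{1,\dots,n\}$ gives the claimed formula. The main obstacle is the careful bookkeeping in this last step: one must verify that the correspondence $\sigma\mapsto H$ and the orientation count are genuinely inverse to each other, in particular that transpositions (length-$2$ cycles) contribute a factor $1$ while every longer cycle contributes a factor $2$, so that the total orientation factor is exactly $2^{c(H)}$ with $c(H)$ counting only the cycle components. A minor point worth checking is the boundary case $K=\emptyset$, where $\sigma$ is the identity, $H(\emptyset)$ consists of the empty subgraph with $c=0$, and the term reduces to $\prod_i d_i$, consistent with the identity permutation's contribution.
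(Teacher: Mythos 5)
Your proof is correct. Note that the paper itself gives no proof of this lemma---it is quoted from Brualdi and Goldwasser \cite{bru}---and your argument (expanding $\mathrm{per}(L(G))$ over permutations, splitting off the fixed points as the diagonal factor $\prod_{i\notin K}d_i$, and matching the fixed-point-free permutations on $K$ with edge-and-cycle subgraphs $H\in H(K)$ weighted by the orientation count $2^{c(H)}$) is exactly the standard proof of this classical identity, with the sign $(-1)^{|K|}$ and the boundary case $K=\emptyset$ handled correctly.
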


\begin{lemma}\label{art212}{\cite{liu}}
Let $G$ be a graph on $n$ vertices. If the degree sequence of $G$ is $(d_{1}, d_{2},...,d_{n}),$ then
$$per(Q(G))=\sum\limits_{K\subseteq
\{1,...,n\}}\sum\limits_{H\in H(K)}2^{c(H)}\prod \limits_{i \notin K}d_{i}.$$
\end{lemma}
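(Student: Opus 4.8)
The plan is to expand $per(Q(G))$ directly from the definition of the permanent and to organise the resulting sum over permutations according to their cycle structure. Writing $Q(G)=D(G)+A(G)$, the diagonal entries are $Q_{ii}=d_{i}$ and the off-diagonal entries are $Q_{ij}=A_{ij}\in\{0,1\}$, equal to $1$ precisely when $v_{i}v_{j}\in E(G)$. Hence
$$per(Q(G))=\sum_{\sigma\in S_{n}}\prod_{i=1}^{n}Q_{i\sigma(i)},$$
and for a fixed $\sigma$ I would let $K=K(\sigma)=\{i:\sigma(i)\neq i\}$ be its support. The fixed points contribute $\prod_{i\notin K}Q_{ii}=\prod_{i\notin K}d_{i}$, while on $K$ the permutation $\sigma$ restricts to a fixed-point-free permutation whose contribution is $\prod_{i\in K}A_{i\sigma(i)}$.

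Next I would decompose the restriction of $\sigma$ to $K$ into its disjoint cycles and observe that a nonzero contribution forces every factor $A_{i\sigma(i)}$ to equal $1$. A $2$-cycle $(i\,j)$ then forces $v_{i}v_{j}\in E(G)$, i.e.\ an edge component (note $A_{ij}A_{ji}=A_{ij}$ since $A(G)$ is a $0$-$1$ matrix), and a cycle $(i_{1}\,i_{2}\,\cdots\,i_{k})$ with $k\geq 3$ forces $v_{i_{1}}v_{i_{2}},\ldots,v_{i_{k}}v_{i_{1}}$ all to be edges, i.e.\ a cycle component of length $k$ in $G$. Reading off the union of these components yields a subgraph $H$ on the vertices $\{v_{i}:i\in K\}$ each of whose components is an edge or a cycle, that is, precisely an element of $H(K)$.

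The crucial bookkeeping step is the multiplicity count. For a fixed $H\in H(K)$ I would count how many permutations $\sigma$ (supported on $K$) produce it. A $2$-cycle is its own reverse, so each edge component arises from exactly one transposition; but a cycle of length $k\geq 3$ can be traversed in two directions, so each cycle component arises from exactly two permutation cycles. Since distinct components act on disjoint index sets, the number of $\sigma$ yielding a given $H$ is the product over its components, namely $2^{c(H)}$, where $c(H)$ counts the cycle components. Summing over all $K$ and all $H\in H(K)$ then gives
$$per(Q(G))=\sum_{K\subseteq\{1,\ldots,n\}}\sum_{H\in H(K)}2^{c(H)}\prod_{i\notin K}d_{i},$$
as claimed.

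I expect the orientation/multiplicity argument to be the main obstacle: one must verify carefully that transpositions contribute a single orientation while longer cycles contribute exactly two, and that this counting is unaffected by the $0$-$1$ nature of $A(G)$. As a consistency check I would compare with Lemma \ref{art211}: the only difference between $L(G)=D(G)-A(G)$ and $Q(G)$ is that each off-diagonal factor of $L$ carries an extra sign, so a permutation with support $K$ contributes $(-1)^{|K|}$ times as much to $per(L(G))$ as to $per(Q(G))$. Since $|K|=|V(H)|$ is determined by the associated subgraph, this exactly accounts for the factor $(-1)^{|K|}$ appearing in Lemma \ref{art211} and confirms the signless formula above.
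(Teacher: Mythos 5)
Your proof is correct: expanding $per(Q(G))$ over permutations, splitting off the fixed points (which contribute $\prod_{i\notin K}d_i$ since $Q_{ii}=d_i$), identifying nonzero fixed-point-free restrictions with subgraphs in $H(K)$, and counting one transposition per edge component but two oriented permutation cycles per cycle component gives exactly the factor $2^{c(H)}$ in the stated formula. Note that the paper offers no proof of this lemma---it is quoted from \cite{liu}---and your argument is precisely the standard permanent-expansion proof used there (and in \cite{bru} for the Laplacian analogue, where, as your consistency check observes, the only change is the sign $(-1)^{|K|}$ coming from the off-diagonal entries of $D(G)-A(G)$).
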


A graph $G$ is nearly regular if the modulus of the difference between the degrees of any two vertices in $G$ does not exceed 1.

\begin{lemma}\label{art213}{\cite{liu}}
Let $G$ be a nearly regular  graph on $n$ vertices. If $\pi(L(H);x)=\pi(L(G);x)$ (or $\pi(Q(H);x)=\pi(Q(G);x)$), then $H$ has the same degree sequence as $G$.
\end{lemma}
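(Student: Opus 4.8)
The plan is to extract from the permanental polynomial exactly those degree-based invariants that pin down a nearly regular degree sequence, and then to invoke a convexity (smoothing/majorization) argument. First I would observe that whether we are in the Laplacian or the signless Laplacian case, parts $(i)$--$(iii)$ of Lemma \ref{art23} (respectively Lemma \ref{art24}) are identical: any graph $H$ that is (signless) Laplacian copermanental with $G$ must share with $G$ the number of vertices $n$, the number of edges $m$ (equivalently $\sum_i d_i = 2m$), and the sum of squared degrees $\sum_i d_i^2$. These three numbers are all I will need.

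Next I would establish the key combinatorial fact: among all sequences of $n$ nonnegative integers with fixed sum $2m$, the quantity $\sum_i d_i^2$ is minimized, and the minimizer is unique as a multiset, precisely by the nearly regular sequence. Writing $k=\lfloor 2m/n\rfloor$, this sequence has $2m-nk$ entries equal to $k+1$ and the remaining $n-(2m-nk)$ entries equal to $k$, and it is the unique such multiset for given $n$ and $2m$. The minimizing property follows from a standard exchange argument: if some sequence has two entries with $d_i \ge d_j + 2$, then replacing the pair $(d_i,d_j)$ by $(d_i-1,d_j+1)$ keeps the sum fixed while strictly decreasing $\sum_i d_i^2$ by $2(d_i-d_j-1)\ge 2$. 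Hence any minimizer has all entries differing by at most $1$, i.e.\ is nearly regular.

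Finally I would combine the two steps. Since $G$ is nearly regular, its degree sequence is exactly the unique minimizer described above, so $\sum_i d_i^2(G)$ equals the minimum value attainable for parameters $(n,2m)$. Because $H$ has the same $n$, the same $2m$, and the same $\sum_i d_i^2$ as $G$, the degree sequence of $H$ also attains this minimum; by uniqueness of the minimizing multiset, $H$ must have the same degree sequence as $G$. The same reasoning applies verbatim in the signless Laplacian case, using Lemma \ref{art24} in place of Lemma \ref{art23}.

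I do not anticipate a serious obstacle: the only nontrivial ingredient is the uniqueness of the sum-of-squares minimizer, which is elementary. The point requiring care is to phrase the exchange argument so that it yields uniqueness of the minimizing \emph{multiset} rather than merely the existence of some nearly regular minimizer, since it is precisely that uniqueness which upgrades ``$H$ and $G$ have the same value of $\sum_i d_i^2$'' to ``$H$ and $G$ have identical degree sequences.''
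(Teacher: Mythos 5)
Your argument is correct and complete. Note that this paper does not prove Lemma \ref{art213} at all---it is quoted from \cite{liu}---so there is no internal proof to compare against; your proposal supplies exactly the standard argument behind the cited result: parts $(i)$--$(iii)$ of Lemma \ref{art23} (resp.\ Lemma \ref{art24}) fix $n$, $m$, and $\sum_i d_i^2$, and the exchange (smoothing) argument shows that for fixed $n$ and $\sum_i d_i = 2m$ the nearly regular sequence is the unique multiset minimizing $\sum_i d_i^2$, which forces the degree sequences of $G$ and $H$ to coincide. You also correctly identified the one point needing care---uniqueness of the minimizing multiset rather than mere existence of a nearly regular minimizer---and your handling of it (the strict decrease by $2(d_i - d_j - 1) \ge 2$, plus the fact that $k = \lfloor 2m/n \rfloor$ and the number of entries equal to $k+1$ are determined by $n$ and $2m$) closes that gap.
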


\section{Proofs of Theorems \ref{art11} and \ref{art12}}
Before to give the proofs of Theorems \ref{art11} and \ref{art12}, we first calculate the Laplacian permanental polynomial of $d(p, q, r)$ and $\theta(p, q, r)$, respectively.

Regard $d(p, q, r)$ as the coalescence obtained from $C_{p}$ and $D_{q,r+1}$  by identifying the vertex $v$ of  $C_{p}$  with a pendant vertex of  $D_{q,r+1}$. Apply Lemma \ref{art26} $(a)$ to the coalesced vertex $v$, then apply Lemma \ref{art25} $(a)$ to the vertex $u$ of $D_{q,r+1}$, we get
\begin{eqnarray*}
\pi(L(d(p, q, r));x)&=&\pi(L(C_{p}))[(x-3)\pi(U_{q-1})\pi(U_{r})+2\pi(U_{q-2})\pi(U_{r})
+\pi(U_{q-1})\pi(U_{r-1})\\
&&+2\pi(U_{r})]+\pi(U_{p-1})[(x-3)\pi(U_{q-1})\pi(B_{r+1})+2\pi(U_{q-2})\pi(B_{r+1})
\\
&&+\pi(U_{q-1})\pi(B_{r})+2\pi(B_{r+1})]-x\pi(U_{p-1})[(x-3)\pi(U_{q-1})\pi(U_{r})\\
&&+2\pi(U_{q-2})\pi(U_{r})
+\pi(U_{q-1})\pi(U_{r-1})+2\pi(U_{r})].
\end{eqnarray*}
Plugging $x=\frac{y^{2}+2y-1}{y}$ and Lemma \ref{art27} $(b)$, $(c)$, and $(d)$, by Maple 13.0, we have
\begin{equation}\label{equ2}
  y^{p+q+r}(y^{2}+1)^{3}\pi(L(d(p, q, r));y)-f(y)=f_{L}(p, q, r;y),
\end{equation}
where $n=p+q+r$,
$f(y)=4y^{2n+2}+(-1)^{n}+2(-1)^ny+5(-1)^ny^{2}+4(-1)^{n}y^{4}+4(-1)^ny^{3}+y^{2n+6}+5y^{2n+4}-2y^{2n+5}-4y^{2n+3},$ and
\begin{align*}
\begin{array}{lllll}
f_{L}(p, q, r;y)&=2y^{2p+2r+q+8}&+2y^{2q+2r+p+6}&+(-1)^{r}y^{2+2p+2q}&-2(-1)^{r+p}y^{5+q}\\
&+2(-1)^{p+r}y^{3+q}&+2(-1)^{r}y^{2p+q+5}&-2(-1)^{r}y^{2p+q+3}&-2(-1)^{p}y^{2r+q+8}\\
&-2(-1)^{p}y^{2r+q+7}&+2(-1)^{p}y^{2r+q+5}&-4(-1)^{r}y^{2p+q+1}&+4(-1)^{p}y^{2r+q+3}\\
&+8(-1)^{r}y^{p+q+2}&+2(-1)^{p}y^{2r+2q+3}&+4(-1)^{r+p}y^{1+q}&+4(-1)^{p}y^{2r+q+4}\\
&+2(-1)^{r}y^{2p+q+2}&+6(-1)^{p+r}y^{q+2}&+(-1)^{p+q}y^{2r+6}&+2(-1)^{q}y^{2r+p+6}\\
&+2(-1)^{q+r}y^{p+1}&+(-1)^{p}y^{2r+2q+6}&(-1)^{q}y^{2p+2r+6}&+6(-1)^{q+r}y^{p+2}\\
&+2(-1)^{p}y^{q+2r+6}&+4(-1)^{q+r}y^{p+4}&+(-1)^{q+r}y^{2p+2}&-2(-1)^{p+r}y^{2q+3}\\
&-2(-1)^{r}y^{2q+p+3}&+(-1)^{p}y^{2q+2r+4}&-2(-1)^{r}y^{2q+p+1}&+(-1)^{r+p}y^{2q+2}\\
&+2(-1)^{r}y^{2q+p+2}&+2(-1)^{q+r}y^{p+3}&+2(-1)^{q}y^{2p+2r+3}&+2(-1)^{q}y^{2r+p+3}\\
&+2(-1)^{p+q}y^{2r+5}&+2(-1)^{q}y^{2r+p+5}&-2(-1)^{r}y^{2p+2q+1}&+6(-1)^{p+r}y^{q+4}\\
&+4(-1)^{r}y^{p+q+4}&-2(-1)^{q+r}y^{2p+3}&+(-1)^{q}y^{2p+2r+4}&+(-1)^{p+q}y^{2r+4}\\
&+2(-1)^{q}y^{2r+p+4}&-2(-1)^{r}y^{2p+q+4}&+(-1)^{r}y^{2p+2q}&+(-1)^{p+r}y^{p+r}y^{2q}\\
&+2(-1)^{r}y^{2q+p}&+(-1)^{q+r}y^{2p}+&2(-1)^{q+r}y^{p}&+4(-1)^{r}y^{2p+q}\\
&+4(-1)^{r}y^{p+q}&-4y^{2p+2r+q+3}&+4y^{2r+p+q+6}&+2y^{2p+2r+q+6}\\
&+4y^{2p+2r+q+4}&-2y^{2q+2r+p+5}&+4y^{2q+2r+p+2}&-2y^{2q+2r+p+3}\\
&+2y^{2p+2r+q+7}&-2y^{2p+2r+q+5}&+4y^{2p+2r+q+2}&+4y^{2r+p+q+2}\\
&+8y^{2r+p+q+4}&+6y^{2q+2r+p+4}.
\end{array}
\end{align*}

Substituting $y=1$ into Equation (\ref{equ2}), we get the following result.
\begin{lemma}\label{art31}
Let $y=1$, then
$\pi(L(d(p, q, r));y=1)=\frac{9}{2}+\frac{3(-1)^{p}}{2}+\frac{3(-1)^{q}}{2}+\frac{(-1)^{p+q}}{2}+2(-1)^{q+r}+2(-1)^{r}+2(-1)^{p+r}+2(-1)^{p+q+r}.$
\end{lemma}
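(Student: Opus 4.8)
The plan is to obtain Lemma \ref{art31} by a direct substitution of $y=1$ into the identity recorded in Equation (\ref{equ2}), since $\pi(L(d(p,q,r));y)$ appears there explicitly and everything else on both sides is a finite sum of monomials in $y$ whose coefficients are integers times a power of $-1$ in $p,q,r$. First I would evaluate the prefactor: at $y=1$ one has $y^{p+q+r}=1$ and $(y^{2}+1)^{3}=2^{3}=8$, so the left-hand side of Equation (\ref{equ2}) collapses to $8\,\pi(L(d(p,q,r));y=1)-f(1)$. Rearranging immediately gives
\[
\pi(L(d(p,q,r));y=1)=\frac{f(1)+f_{L}(p,q,r;1)}{8},
\]
so the whole lemma reduces to computing the two numbers $f(1)$ and $f_{L}(p,q,r;1)$.

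Next I would compute $f(1)$. Writing $n=p+q+r$ and setting $y=1$, the five terms of $f$ carrying a factor $(-1)^{n}$ contribute $(1+2+5+4+4)(-1)^{n}=16(-1)^{n}$, while the remaining terms contribute $4+1+5-2-4=4$; hence $f(1)=4+16(-1)^{n}=4+16(-1)^{p+q+r}$. The heart of the computation is $f_{L}(p,q,r;1)$. Since each summand of $f_{L}$ has the shape (integer)$\times(-1)^{\varepsilon}\times y^{\text{(monomial)}}$ with $\varepsilon$ one of the eight parities $0,\,p,\,q,\,r,\,p{+}q,\,p{+}r,\,q{+}r,\,p{+}q{+}r$, setting $y=1$ kills every $y$-power and leaves a sum I can organize by sign class. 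I would tally the coefficients within each class: the sign-free terms sum to $32$, the $(-1)^{p}$ terms to $12$, the $(-1)^{q}$ terms to $12$, the $(-1)^{r}$ terms to $16$, the $(-1)^{p+q}$ terms to $4$, the $(-1)^{p+r}$ terms to $16$, the $(-1)^{q+r}$ terms to $16$, and there are no $(-1)^{p+q+r}$ terms. This would yield
\[
f_{L}(p,q,r;1)=32+12(-1)^{p}+12(-1)^{q}+4(-1)^{p+q}+16(-1)^{r}+16(-1)^{p+r}+16(-1)^{q+r}.
\]

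Adding $f(1)$, which supplies the missing $16(-1)^{p+q+r}$ and turns the constant $32$ into $36$, and then dividing by $8$ reproduces the claimed formula. The main obstacle is purely clerical: $f_{L}$ has on the order of seventy terms, a few of them printed with ambiguous or missing signs (for instance the stray $(-1)^{p+r}y^{p+r}y^{2q}$ and the $(-1)^{q}y^{2p+2r+6}$ with no visible leading sign), so the real work is transcribing each term correctly and then summing within each of the eight parity buckets without error. No structural idea is needed beyond the single observation $(y^{2}+1)^{3}=8$ at $y=1$; the risk lies entirely in the bookkeeping, which I would guard against by checking that the eight bucket totals, after adding $f(1)$ and dividing by $8$, reproduce exactly the eight coefficients $\tfrac{9}{2},\,\tfrac{3}{2},\,\tfrac{3}{2},\,\tfrac{1}{2},\,2,\,2,\,2,\,2$ appearing in the stated expression.
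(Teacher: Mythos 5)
Your proposal is correct and takes exactly the paper's route: the paper's entire proof of Lemma \ref{art31} is the one-line remark that the result follows by substituting $y=1$ into Equation (\ref{equ2}), which is precisely your plan, with the prefactor $(y^{2}+1)^{3}=8$. Your bookkeeping also checks out --- $f(1)=4+16(-1)^{p+q+r}$ and the parity-bucket totals $32,\,12,\,12,\,16,\,4,\,16,\,16$ for $f_{L}(p,q,r;1)$ are what the printed terms of $f_{L}$ actually sum to, and dividing by $8$ reproduces the stated coefficients $\tfrac{9}{2},\tfrac{3}{2},\tfrac{3}{2},2,\tfrac{1}{2},2,2,2$.
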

Correspondingly, apply Lemma \ref{art25} $(a)$ to the two vertices $v$ and $u$ of $\theta(p, q, r)$ respectively, we have
\begin{eqnarray*}
&&\pi(L(\theta(p, q, r));x)\\
&=&(x-3)[(x-3)\pi(U_{p})\pi(U_{q})\pi(U_{r})+\pi(U_{p-1})\pi(U_{q})\pi(U_{r})
+\pi(U_{p})\pi(U_{q-1})\pi(U_{r})\\
&&+\pi(U_{p})\pi(U_{q})\pi(U_{r-1})]+[(x-3)\pi(U_{p-1})\pi(U_{q})\pi(U_{r})+\pi(U_{p-2})\pi(U_{q})\pi(U_{r})\\
&&+\pi(U_{p-1})\pi(U_{q-1})\pi(U_{r})+\pi(U_{p-1})\pi(U_{q})\pi(U_{r-1})]+[(x-3)\pi(U_{p})\pi(U_{q-1})\pi(U_{r})\\
&&+\pi(U_{p-1})\pi(U_{q-1})\pi(U_{r})
+\pi(U_{p})\pi(U_{q-2})\pi(U_{r})+\pi(U_{p})\pi(U_{q-1})\pi(U_{r-1})]\\
&&+[(x-3)\pi(U_{p})\pi(U_{q})\pi(U_{r-1})+\pi(U_{p-1})\pi(U_{q})\pi(U_{r-1})
+\pi(U_{p})\pi(U_{q-1})\pi(U_{r-1})\\
&&+\pi(U_{p})\pi(U_{q})\pi(U_{r-2})]
+2\pi(U_{p})+2\pi(U_{q})+2\pi(U_{r}).
\end{eqnarray*}
Plugging $x=\frac{y^{2}+2y-1}{y}$ and Lemma \ref{art28} $(c)$, with the help of Maple 13.0, we obtain
\begin{equation}\label{equ3}
  y^{p+q+r+2}(y^{2}+1)^{3}\pi(L(\theta(p, q, r));y)-f(y)=f_{L}(p, q, r;y),
\end{equation}
where $n=p+q+r+2$,
$f(y)=4(-1)^{n-2}y^{4}+4(-1)^{n-2}y^{3}+2(-1)^{n-2}y+5(-1)^{n-2}y^{2}+(-1)^{n-2}+y^{2n+6}-2y^{2n+5}+5y^{2n+4}+y^{2n}+3y^{2n+2}-4y^{2n+3}$,  and
\begin{align*}
\begin{array}{lllll}
f_{L}(p, q, r;y)&=2y^{2p+q+r+8}&+2y^{2q+r+p+8}&+2y^{2r+q+p+8}&+(-1)^{q}y^{6+2p+2r}\\
&+(-1)^{r}y^{6+2p+2q}&+2(-1)^{p}y^{6+2q+2r}&+4y^{2p+r+q+6}&-2(-1)^{q}y^{2p+2r+5}\\
&+(-1)^{q+r}y^{2p+6}&-2(-1)^{r}y^{2p+2q+5}&+(-1)^{p+q}y^{2r+6}&-2(-1)^{p}y^{2r+2q+5}\\
&+(-1)^{r+p}y^{2q+6}&+(-1)^{q}y^{2p+2r+4}&+2(-1)^{r+q}y^{2p+5}&+2(-1)^{q}y^{p+r+6}\\
&+2(-1)^{r}y^{p+q+6}&+2(-1)^{p+q}y^{2r+5}&+2(-1)^{p}y^{q+r+6}&+2(-1)^{p+r}y^{2q+5}\\
&+(-1)^{q+r}y^{2p+4}&+(-1)^{p+q}y^{2r+4}&+2(-1)^{r+p}y^{4+2q}&+(-1)^{r}y^{2p+2q+2}\\
&+4(-1)^{q}y^{p+r+4}&+4(-1)^{r}y^{q+p+4}&+4(-1)^{p}y^{q+r+4}&-(-1)^{p+r}y^{2q+2}\\
&+2(-1)^{q}y^{2+p+r}&+2(-1)^{r}y^{q+p+2}&+2(-1)^{p}y^{2+q+r}&+2y^{2p+q+r+4}\\
&+2y^{2r+p+q+4}&+2y^{2q+p+r+4}&+4y^{2q+p+r+6}&+4y^{p+q+2r+6}.
\end{array}
\end{align*}
Substituting $y=1$ into Equation (\ref{equ3}), we have the following result immediately.
\begin{lemma}\label{art32}
Let $y=1$, then
$\pi(L(\theta(p, q, r));y=1)=\frac{7}{2}+\frac{(-1)^{p+r}}{2}+\frac{(-1)^{p+q}}{2}+\frac{(-1)^{r+q}}{2}+(-1)^{q}+(-1)^{r}+(-1)^{p}+2(-1)^{p+q+r}.$
\end{lemma}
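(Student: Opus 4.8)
The plan is to obtain Lemma~\ref{art32} as a direct specialization of the identity already recorded in Equation~(\ref{equ3}), evaluated at $y=1$. Since that equation expresses $y^{p+q+r+2}(y^{2}+1)^{3}\pi(L(\theta(p, q, r));y)$ in closed form as $f(y)+f_{L}(p, q, r;y)$, no further graph-theoretic input is needed: the whole task reduces to arithmetic at the single point $y=1$. The only structural fact to keep in mind is that here $n=p+q+r+2$, so every occurrence of $(-1)^{n-2}$ (equivalently $(-1)^{n}$) is to be read as $(-1)^{p+q+r}$.

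First I would evaluate the prefactor: at $y=1$ the monomial $y^{p+q+r+2}$ equals $1$ and $(y^{2}+1)^{3}=2^{3}=8$, so the left-hand side of Equation~(\ref{equ3}) becomes $8\,\pi(L(\theta(p, q, r));y=1)$. Next I would evaluate $f(1)$. Grouping the parity-dependent monomials, the five terms carrying $(-1)^{n-2}$ contribute $(4+4+2+5+1)(-1)^{p+q+r}=16(-1)^{p+q+r}$, while the remaining pure powers of $y$ contribute $1-2+5+1+3-4=4$, giving $f(1)=16(-1)^{p+q+r}+4$.

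The bulk of the work is the evaluation of $f_{L}(p, q, r;1)$. Because at $y=1$ every monomial $y^{(\cdots)}$ equals $1$, this amounts to summing the coefficients sorted into parity classes determined by their $(-1)$-factor. I would collect the terms into the seven classes that survive — the constant class and the classes indexed by $(-1)^{p}$, $(-1)^{q}$, $(-1)^{r}$, $(-1)^{p+q}$, $(-1)^{q+r}$, $(-1)^{p+r}$ — and add within each. I expect the constant terms to total $24$, each single-index class to total $8$, and each two-index class to total $4$, i.e.
\begin{align*}
f_{L}(p, q, r;1)=24+8\bigl((-1)^{p}+(-1)^{q}+(-1)^{r}\bigr)+4\bigl((-1)^{p+q}+(-1)^{q+r}+(-1)^{p+r}\bigr).
\end{align*}
Adding $f(1)$ and dividing by $8$ then yields the asserted expression for $\pi(L(\theta(p, q, r));y=1)$. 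The main obstacle is purely clerical: the list defining $f_{L}$ contains on the order of forty monomials written across a five-column array, and a single misassigned term would corrupt one of the coefficients. The safeguard I would use is to check that the surviving classes are exactly the symmetric functions of $p,q,r$ appearing in the target formula and that no class indexed by $(-1)^{p+q+r}$ remains after combination, since the $(-1)^{p+q+r}$ contribution comes solely from $f(1)$; this structural consistency check flags any bookkeeping slip before the final division by $8$.
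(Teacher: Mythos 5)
Your proposal is correct and follows exactly the paper's route: the paper likewise obtains Lemma~\ref{art32} by substituting $y=1$ into Equation~(\ref{equ3}), and your class-by-class tallies ($f(1)=16(-1)^{p+q+r}+4$, constant total $24$, single-index totals $8$, two-index totals $4$) check out and yield the stated formula after division by $8$. You simply spell out the arithmetic that the paper compresses into ``substituting $y=1$ \ldots immediately,'' and your parity-class consistency check is a sensible safeguard.
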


\begin{lemma}\label{art33}
There exist no two non-isomorphic  graphs as $d(p, q, r)$ share the same Laplacian permanental polynomial.
\end{lemma}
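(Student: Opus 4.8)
The goal of Lemma~\ref{art33} is to show that the map $(p,q,r)\mapsto d(p,q,r)$ is injective up to graph isomorphism as seen through the Laplacian permanental polynomial; equivalently, two non-isomorphic graphs of the form $d(p,q,r)$ never share the same polynomial. My plan is to argue that the polynomial determines the multiset $\{p,q,r\}$ (together with $n=p+q+r$), which is exactly the data needed to recover the graph up to isomorphism, since $d(p,q,r)\cong d(q,p,r)$ and the two cycle lengths together with the connecting path length fix the graph. So it suffices to show that if $d(p,q,r)$ and $d(p',q',r')$ are Laplacian copermanental, then $\{p,q,r\}=\{p',q',r'\}$ (as multisets, with the path length $r$ distinguished from the cycle lengths $p,q$ by degree data).

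First I would extract the coarse invariants guaranteed by Lemma~\ref{art23}: the number of vertices gives $n=p+q+r$; the edge count gives $m$, and since $d(p,q,r)$ is bicyclic we already have $m=n+1$ for all members of the family, so this alone is not decisive. The useful invariants are the degree-based quantities. The degree sequence of $d(p,q,r)$ consists of two vertices of degree $3$ (the two branch vertices on the cycles), and the remaining $n-2$ vertices of degree $2$ except for the interior structure of the connecting path; more precisely the path contributes degree-$2$ vertices and the degree sequence is essentially fixed across the whole family, so $\sum d_i^2$ and $\sum d_i^3$ are constant and give no separation. This forces the proof to rely on the fine structure encoded in Equation~(\ref{equ2}) and the specialization in Lemma~\ref{art31}, rather than on the low-order coefficients alone.

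The decisive step is therefore to use the explicit closed form. Since the polynomial $\pi(L(d(p,q,r));x)$ is determined by the graph, so is the auxiliary function $f_L(p,q,r;y)$ obtained from Equation~(\ref{equ2}) after substituting $x=\frac{y^2+2y-1}{y}$ and subtracting the universal term $f(y)$. I would compare the Laurent-polynomial $f_L(p,q,r;y)$ with $f_L(p',q',r';y)$ term by term. The leading behaviour in $y$ singles out the largest exponents, which encode $\max$-type combinations of $p,q,r$; by reading off the top several exponents and their coefficients one recovers the individual parameters. Concretely, I would evaluate at $y=1$ (Lemma~\ref{art31}) to pin down the parities of $p,q,r$ and of their pairwise and triple sums, and then use the dominant exponents of $f_L$ to recover the actual integer values. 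The parity information from Lemma~\ref{art31} splits the argument into cases according to the parities of $p$, $q$, $r$; within each case the highest-degree monomials of $f_L$ are unambiguous and yield the parameters uniquely.

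The main obstacle will be the bookkeeping in matching monomials of $f_L(p,q,r;y)$: the expression has many terms with sign factors $(-1)^p,(-1)^q,(-1)^r$, and exponents that are linear forms such as $2p+2r+q$, $2q+2r+p$, and $2p+2q$, so one must verify that no accidental cancellation or coincidence of exponents lets a different triple $(p',q',r')$ reproduce the same Laurent polynomial. I expect to handle this by isolating the genuinely largest exponent (for generic ranges of the parameters the term $2y^{2p+2r+q+8}$ or its companions dominate), solving for the extremal parameter, and then inducting downward on the remaining exponents; the parity constraints from Lemma~\ref{art31} eliminate the borderline ties. Care is needed at the small-parameter boundary ($p=3$ or $q=3$, the minimal cycle lengths, or $r$ small) where several exponents collide, and these finitely many degenerate configurations may have to be checked directly.
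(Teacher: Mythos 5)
Your overall strategy coincides with the paper's: both arguments reduce to the identity $f_{L}(p',q',r';y)=f_{L}(p,q,r;y)$ obtained from Equation~(\ref{equ2}) together with $p'+q'+r'=p+q+r$ from Lemma~\ref{art23}(i), and both then try to recover the parameters by matching terms of largest exponent. The gap lies exactly where your plan leans on parity. The dangerous ambiguity is the swap of a cycle length with the path length: for $(p',q',r')=(r,q,p)$ (with $r\geq 3$) one has $n'=n$ and $2p'+2r'+q'+8=2p+2r+q+8$, so the dominant exponent you propose to ``solve for the extremal parameter'' is invariant under this swap; and if moreover $p\equiv r \pmod 2$ (e.g.\ $d(5,3,7)$ versus $d(7,3,5)$), then all the signs $(-1)^{p},(-1)^{q},(-1)^{r}$, and hence the value computed in Lemma~\ref{art31} at $y=1$, are unchanged as well. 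Thus the two tools you designate as decisive --- the top exponent of $f_{L}$ and the parities extracted from the $y=1$ specialization --- cannot separate these non-isomorphic graphs, and your assertion that ``the parity constraints from Lemma~\ref{art31} eliminate the borderline ties'' fails precisely in the case that matters.

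What actually breaks the $p\leftrightarrow r$ symmetry is one of two things: (a) the lower-order terms of $f_{L}$ that are not symmetric under the swap, such as $2y^{2q+2r+p+6}$ and $(-1)^{r}y^{2p+2q+2}$, which you would have to match explicitly and then verify that no accidental coincidence or cancellation among the several dozen terms of $f_{L}$ spoils the identification --- this is exactly the bookkeeping you defer to ``finitely many degenerate configurations''; or (b) what the paper does instead: invoke the combinatorial expansion of $\mathrm{per}(L(G))$ in Lemma~\ref{art211}, which is sensitive to the actual cycle lengths of the graph and directly rules out the solutions $p=r'$, $r=p'$, $q=q'$ and $q=r'$, $r=q'$, $p=p'$, leaving only $p=p'$, $q=q'$, $r=r'$ and the harmless exchange $p=q'$, $q=p'$, $r=r'$. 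Since your proposal neither carries out the downward induction on exponents nor supplies a substitute for Lemma~\ref{art211}, the crucial case remains unresolved, and the argument as written is a plan rather than a proof.
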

\begin{proof}
Suppose that $d(p', q', r')$ and $d(p, q, r)$ are two bicyclic graphs with vertices $n'=p'+q'+r'$, $n=p+q+r$,  respectively. If $d(p', q', r')$ and $d(p, q, r)$ have the same Laplacian permanental polynomials, then by (i) of Lemma \ref{art23}, we have
\begin{equation}\label{}
  p'+q'+r'=p+q+r,
\end{equation}
from (\ref{equ2}), we obtain that
\begin{equation}\label{}
  f_{L}(p',q',r';y)=f_{L}(p,q,r;y).
\end{equation}
Obviously, the term in $f_{L}(p,q,r;y)$ with the largest exponent is $2y^{2p+2r+q+8},$ $2y^{2q+2r+p+6}$ or $(-1)^{r}y^{2+2p+2q},$ and similarly for $f_{L}(p',q',r';y)$. From the equations above, we have $2y^{2p'+2r'+q'+8}\\=2y^{2p+2r+q+8},$  $2y^{2q'+2r'+p'+6}=2y^{2q+2r+p+6}$ or $(-1)^{r'}y^{2+2p'+2q'}=(-1)^{r}y^{2+2p+2q}$.

For $2y^{2p'+2r'+q'+8}=2y^{2p+2r+q+8}$, we have $p=r'$, $r=p'$, $q=q'$ or $p=p'$, $q=q'$, $r=r'$. The same for $2y^{2q'+2r'+p'+6}=2y^{2q+2r+p+6}$, we get $q=r$', $r=q'$, $p=p'$ or $p=p'$, $q=q'$, $r=r'$. It's obvious that we can't have  $p=r'$, $r=p'$, $q=q'$ or $q=r'$, $r=q'$, $p=p'$ by Lemma \ref{art211}.
Conclude above, we have $p=p'$, $q=q'$, $r=r'$ or $p=q'$, $q=p'$, $r=r'$, that is $d(p', q', r')\cong d(p, q, r)$.
\end{proof}

{\bf Proof of Theorem \ref{art11}.}
Let $G$ be a graph having the same Laplacian permanental polynomial as $d(p, q, r)$. By the definition of $d(p, q, r)$ and Lemma \ref{art213}, we get that the degree sequence of $G$ is $(3^{2}, 2^{n-2})$.

Assume that $G$ is a connected graph. By Lemma \ref{art33}, we obtain that $G\cong d(p, q, r)$.

When $G$ is disconnected, we consider the following two cases.

If  $G=d(p', q', r')\cup(\bigcup\limits_{i=1}^{k} C_{k_{i}})$, then by Lemmas  \ref{art28}$(d)$ and \ref{art31}, $\pi(L(G);y=1)\neq \pi(L(d(p, q, r));\\y=1)=\frac{9}{2}+\frac{3(-1)^{p}}{2}+
\frac{3(-1)^{q}}{2}+\frac{(-1)^{p+q}}{2}+2(-1)^{q+r}+2(-1)^{r}+2(-1)^{p+r}+2(-1)^{p+q+r}$. This contradicts
the  assumption that $G$ and $d(p, q, r)$ are Laplacian copermanental.

If  $G=\theta(p', q', r')\cup(\bigcup\limits_{i=1}^{k} C_{k_{i}})$, then by Lemmas \ref{art28}$(d)$, \ref{art31} and \ref{art32}, $\pi(L(G);y=1)\neq \pi(L(d(p, q, r));y=1)=\frac{9}{2}+\frac{3(-1)^{p}}{2}+
\frac{3(-1)^{q}}{2}+\frac{(-1)^{p+q}}{2}+2(-1)^{q+r}+2(-1)^{r}+2(-1)^{p+r}+2(-1)^{p+q+r}$, a contradiction.
$\Box$

\begin{lemma}\label{art34}
There exist no two non-isomorphic graphs as $\theta(p, q, r)$ share the same Laplacian permanental polynomial.
\end{lemma}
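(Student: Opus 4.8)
The plan is to mirror exactly the argument used for Lemma \ref{art33}, now applied to the family $\theta(p,q,r)$ using Equation (\ref{equ3}) in place of (\ref{equ2}). Suppose $\theta(p',q',r')$ and $\theta(p,q,r)$ share the same Laplacian permanental polynomial, where the vertex counts are $n'=p'+q'+r'+2$ and $n=p+q+r+2$. First I would invoke part (i) of Lemma \ref{art23} to conclude $p'+q'+r'=p+q+r$, so that both graphs have the same number of vertices and $f(y)$ is the same on both sides. Since $y^{n}(y^{2}+1)^{3}\pi(L(\theta(p,q,r));y)-f(y)=f_{L}(p,q,r;y)$, equality of the permanental polynomials forces
\begin{equation}\label{eqfLtheta}
  f_{L}(p',q',r';y)=f_{L}(p,q,r;y).
\end{equation}

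Next I would compare the dominant terms of the two polynomials in (\ref{eqfLtheta}). Inspecting the displayed expression for $f_{L}(p,q,r;y)$, the candidate highest-degree monomials are the three symmetric leading terms $2y^{2p+q+r+8}$, $2y^{2q+r+p+8}$, and $2y^{2r+q+p+8}$ (note the fully symmetric roles of $p,q,r$, reflecting the symmetry of $\theta(p,q,r)$ under permuting its three paths). Matching the term of largest exponent on each side of (\ref{eqfLtheta}) yields a system of exponent equations; because $p'+q'+r'=p+q+r$ is already fixed, each such match reduces to equating the "doubled" index, e.g. $2p'=2p$ and so forth up to the permutation symmetry. The key difference from Lemma \ref{art33} is that here all three parameters play symmetric roles, so the exponent-matching will allow any permutation of $(p,q,r)$ rather than just the single transposition seen in the $d(p,q,r)$ case; this is exactly what we want, since $\theta(p,q,r)$ is invariant under permuting $p,q,r$.

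From the exponent comparison I expect to obtain that $(p',q',r')$ is a permutation of $(p,q,r)$. To finish, I would argue that every such permutation gives an isomorphic graph: since $\theta(p,q,r)\cong\theta(\sigma(p),\sigma(q),\sigma(r))$ for any permutation $\sigma$ of the three paths by definition of $\theta$, we conclude $\theta(p',q',r')\cong\theta(p,q,r)$. Unlike the $d(p,q,r)$ case, there is no need to rule out a genuinely non-isomorphic parameter assignment via Lemma \ref{art211}, because all permutations of $(p,q,r)$ already correspond to the same $\theta$-graph.

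The main obstacle I anticipate is the bookkeeping in the dominant-term analysis: $f_{L}(p,q,r;y)$ has many monomials, and I must verify that, subject to the constraint $p'+q'+r'=p+q+r$, no lower-order term on one side can conspire to equal the leading term on the other after the substitution. The cleanest way to handle this is to fix an ordering convention (say $p\geq q\geq r$ on both sides, which is harmless by the permutation symmetry) and then read off the genuinely largest exponent; once the ordering is normalized, the leading monomial is unambiguous and the exponent equations determine $(p,q,r)$ uniquely. Carrying out that normalization carefully is the delicate part, but it is routine once the convention is fixed.

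\begin{proof}
Suppose $\theta(p',q',r')$ and $\theta(p,q,r)$ are two bicyclic graphs with $n'=p'+q'+r'+2$ and $n=p+q+r+2$ vertices, respectively, and suppose they have the same Laplacian permanental polynomial. By (i) of Lemma \ref{art23} we obtain
\begin{equation}\label{eqvtheta}
  p'+q'+r'=p+q+r,
\end{equation}
so that $n'=n$ and the function $f(y)$ appearing in (\ref{equ3}) coincides for both graphs. Hence, by (\ref{equ3}),
\begin{equation}\label{eqfLmatch}
  f_{L}(p',q',r';y)=f_{L}(p,q,r;y).
\end{equation}
Without loss of generality, using the permutation symmetry $\theta(p,q,r)\cong\theta(\sigma(p),\sigma(q),\sigma(r))$ for any permutation $\sigma$, we may assume $p\geq q\geq r$ and $p'\geq q'\geq r'$. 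Under this convention the monomial of largest exponent in $f_{L}(p,q,r;y)$ is $2y^{2p+q+r+8}$, and likewise the leading monomial of $f_{L}(p',q',r';y)$ is $2y^{2p'+q'+r'+8}$. Matching these in (\ref{eqfLmatch}) gives
\begin{equation*}
  2p'+q'+r'=2p+q+r,
\end{equation*}
which together with (\ref{eqvtheta}) yields $p'=p$, and hence $q'+r'=q+r$. Deleting the matched leading terms and repeating the comparison on the next-largest exponent forces $q'=q$, whence $r'=r$. Therefore $(p',q',r')=(p,q,r)$ under the chosen ordering, and dropping the ordering convention we conclude that $(p',q',r')$ is a permutation of $(p,q,r)$. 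Since $\theta(p,q,r)$ is invariant under permuting its three paths, it follows that $\theta(p',q',r')\cong\theta(p,q,r)$.
\end{proof}
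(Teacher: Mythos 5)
Your skeleton matches the paper's proof (same use of Lemma \ref{art23}(i), same reduction via (\ref{equ3}) to $f_{L}(p',q',r';y)=f_{L}(p,q,r;y)$), but your dominant-term analysis contains a genuine error. You claim the only candidates for the monomial of largest exponent are the three terms $2y^{2p+q+r+8}$, $2y^{p+2q+r+8}$, $2y^{p+q+2r+8}$, and that under the normalization $p\geq q\geq r$ the leader is $2y^{2p+q+r+8}$. This is false: $f_{L}(p,q,r;y)$ also contains the terms $(-1)^{q}y^{2p+2r+6}$, $(-1)^{r}y^{2p+2q+6}$ and $2(-1)^{p}y^{2q+2r+6}$, and under $p\geq q\geq r$ one has $(2p+2q+6)-(2p+q+r+8)=q-r-2$, so whenever $q\geq r+3$ the term $(-1)^{r}y^{2p+2q+6}$ has \emph{strictly larger} exponent than $2y^{2p+q+r+8}$ (e.g.\ for $(p,q,r)=(10,8,3)$ the exponents are $42$ versus $39$). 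Consequently your matching equation $2p'+q'+r'=2p+q+r$ does not follow from equality of the polynomials, and the chain $p'=p$, then $q'=q$, then $r'=r$ collapses on a large part of the parameter space. This is exactly why the paper lists \emph{six} candidate leading terms (the three ``$+8$'' monomials and the three ``$+6$'' monomials) rather than your three.

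Two further points. First, your ``delete the matched terms and repeat'' step is underspecified: when exponents tie (e.g.\ $q=r+2$ gives $2p+2q+6=2p+q+r+8$) coefficients of different terms combine, and possible coincidences among the many lower-order monomials must be controlled before the ``next-largest exponent'' is well defined. Second, you explicitly assert that Lemma \ref{art211} is not needed, whereas the paper invokes it precisely to settle the case analysis created by the six possible leading terms, using ${\rm per}(L(G))$ as an additional invariant. To repair your argument you would need either to redo the comparison with all six candidates, splitting into the regimes $q-r>2$, $q-r=2$, $q-r<2$ (and the corresponding regimes for the primed parameters, which need not coincide), or to follow the paper and bring in Lemma \ref{art211} to eliminate the spurious matchings. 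The permutation-symmetry observation ($\theta(p,q,r)\cong\theta(\sigma(p),\sigma(q),\sigma(r))$) is correct and is a legitimate simplification, but it does not by itself fix the leading-term identification.
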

\begin{proof}
Suppose $\theta(p', q', r')$ and $\theta(p, q, r)$ be two bicyclic graphs with vertices $n'=p'+q'+r'+2, n=p+q+r+2$, respectively. If $\theta(p', q', r')$ and $\theta(p, q, r)$ have the same Laplacian permanental polynomials, then by (i) of Lemma \ref{art23}, we have
\begin{equation}\label{}
  p'+q'+r'=p+q+r,
\end{equation}
by (\ref{equ3}), we have that
\begin{equation}\label{}
  f_{L}(p',q',r';y)=f_{L}(p,q,r;y).
\end{equation}
It's obvious that the term in $f_{L}(p,q,r;y)$ with the largest exponent is $2y^{2r+p+q+8}$, $2y^{2p+q+r+8}$, $2y^{2q+p+r+8}$, $(-1)^{q}y^{2p+2r+6}$, $(-1)^{p}y^{6+2q+2r}$ or $(-1)^{r}y^{6+2p+2q}$, and similarly for $f_{L}(p',q',r';y)$.
From the equations above,we have $2y^{2r+p+q+8}=2y^{2r'+p'+q'+8}$, $2y^{2p+q+r+8}=2y^{2p'+q'+r'+8}$, $2y^{2q+p+r+8}=2y^{2q'+p'+r'+8}$, $(-1)^{q}y^{2p+2r+6}=(-1)^{q'}y^{2p'+2r'+6}$, $(-1)^{p}y^{6+2q+2r}=(-1)^{p'}y^{6+2q'+2r'}$
or $(-1)^{r}y^{6+2p+2q}=(-1)^{r'}y^{6+2p'+2q'}$. By Lemma \ref{art211}, for all the
cases above, we have $p=p'$, $q=q'$, $r=r'$, i.e., $\theta(p', q', r')$ is isomorphic to $\theta(p, q, r)$.
\end{proof}

{\bf Proof of Theorem \ref{art12}.}
Let $G$ be a graph sharing the same Laplacian permanental polynomial as $\theta(p, q, r)$. Obtained from the definition of $\theta(p, q, r)$ and Lemma \ref{art213}, we get the degree sequence of $G$ is $(3^{2}, 2^{n-2})$.

It's obvious that $G\cong \theta(p, q, r)$ when $G$ is connected by Lemma \ref{art34}.

If $G=\theta(p', q', r')\cup(\bigcup\limits_{i=1}^{k} C_{k_{i}})$, then by Lemma \ref{art28} $(d)$ and Lemma \ref{art32}, $\pi(L(G);y=1)\neq \pi(L(\theta(p, q, r));y=1)=\frac{7}{2}+\frac{(-1)^{p+r}}{2}+
\frac{(-1)^{p+q}}{2}+\frac{(-1)^{r+q}}{2}+(-1)^{q}+(-1)^{r}+(-1)^{p}+2(-1)^{p+q+r}$. This contradicts
the  assumption that $G$ and $\theta(p, q, r)$ are Laplacian copermanental.

If $G=d(p', q', r')\cup(\bigcup\limits_{i=1}^{k} C_{k_{i}})$, by Lemma  \ref{art28} $(d)$ and Lemmas \ref{art31}, \ref{art32}, we have $\pi(L(G);y=1)\neq \pi(L(\theta(p, q, r));y=1)=\frac{7}{2}+\frac{(-1)^{p+r}}{2}+
\frac{(-1)^{p+q}}{2}+\frac{(-1)^{r+q}}{2}+(-1)^{q}+(-1)^{r}+(-1)^{p}+2(-1)^{p+q+r}$, a contradiction.
$\Box$

\section{Proofs of Theorems \ref{art13} and \ref{art14}}
Similar to the proofs of Theorems \ref{art11} and \ref{art12}, we compute the signless Laplacian permanental of $d(p, q, r)$ and $\theta(p, q, r)$ at first.

By the definition of $d(p, q, r)$ and Lemmas \ref{art25} $(b)$ and \ref{art26} $(b)$, we get
\begin{eqnarray*}
\pi(Q(d(p, q, r));x)&=&\pi(Q(C_{p}))[(x-3)\pi(U'_{q-1})\pi(U'_{r})+2\pi(U'_{q-2})\pi(U'_{r})
+\pi(U'_{q-1})\pi(U'_{r-1})\\
&&+2(-1)^{q}\pi(U'_{r})]
+\pi(U'_{p-1})[(x-3)\pi(U'_{q-1})\pi(B'_{r+1})+2\pi(U'_{q-2})\pi(B'_{r+1})\\
&&+\pi(U'_{q-1})\pi(B'_{r})
+2(-1)^{q}\pi(B'_{r+1})]-x\pi(U'_{p-1})[(x-3)\pi(U'_{q-1})\pi(U'_{r})\\
&&+2\pi(U'_{q-2})\pi(U'_{r})
+\pi(U'_{q-1})\pi(U'_{r-1})+2(-1)^{q}\pi(U'_{r})].
\end{eqnarray*}
Plugging $x=\frac{y^{2}+2y-1}{y}$ and Lemma \ref{art29} $(b)$, $(c)$, and $(d)$, with the help of Maple 13.0, we have
\begin{equation}\label{equ4}
  y^{p+q+r}(y^{2}+1)^{3}\pi(Q(d(p, q, r));y)-f(y)=f_{Q}(p, q, r;y),
\end{equation}
where $n=p+q+r$, $f(y)=4(-1)^{n}y^{4}+4(-1)^{n}y^{3}+2(-1)^ny+5(-1)^ny^{2}-4y^{2n+3}
+4y^{2n+2}+y^{2n+6}+5y^{2n+4}-2y^{2n+5}+(-1)^{n}$ and
\begin{align*}
\begin{array}{llllll}
&f_{Q}(p, q, r;y)\\
&=2(-1)^{q}y^{2p+2r+q+8}&+2(-1)^{p}y^{2q+2r+p+6}&+(-1)^{r}y^{2+2p+2q}&-2(-1)^{q+r+p}y^{5+q}\\
&+2(-1)^{q}y^{2p+2r+q+7}&+2(-1)^{q+r}y^{2p+q+5}&-2(-1)^{p+q}y^{2r+q+8}
&-2(-1)^{p+q}y^{2r+q+7}\\&+4(-1)^{q+r}y^{2p+q}&+2(-1)^{p+r}y^{2q+p}&+2(-1)^{p+q+r}y^{p}
&+(-1)^{r}y^{2p+2q}\\
&-2(-1)^{q+r}y^{2p+q+4}&+(-1)^{q+r}y^{2p+2}&+(-1)^{r+p}y^{2+2q}&-2(-1)^{r}y^{2p+2q+1}\\
&+2(-1)^{p}y^{2q+2r+3}&-2(-1)^{q+r}y^{2p+3}&-2(-1)^{p+r}y^{2q+3}&+2(-1)^{p+q}y^{2r+5}\\
&+(-1)^{q}y^{2p+2r+4}&+(-1)^{p}y^{2r+2q+4}&+(-1)^{p+q}y^{4+2r}&+2(-1)^{q}y^{2p+2r+3}\\
&+(-1)^{q}y^{2p+2r+6}&+(-1)^{p}y^{2q+2r+6}&+(-1)^{q+p}y^{2r+6}&+4(-1)^{p+q}y^{2r+p+q+6}
\end{array}
\end{align*}
\begin{align*}
\begin{array}{lllll}
&+8(-1)^{p+q}y^{2r+p+q+4}&+4(-1)^{p+q+r}y^{p+q+4}&+4(-1)^{p+q}y^{2r+p+q+2}&+8(-1)^{q+r+p}y^{p+q+2}\\
&+4(-1)^{p+q+r}y^{1+q}&+2(-1)^{q}y^{2p+2r+q+6}&+2(-1)^{p+q}y^{p+2r+6}&+2(-1)^{p+q}y^{2r+q+6}\\
&+6(-1)^{p+q+r}y^{p+2}&+6(-1)^{p+q+r}y^{2+q}&-4(-1)^{q+r}y^{2p+q+1}&-2(-1)^{p+r}y^{2q+p+1}\\
&+4(-1)^{r+p+q}y^{p+q}&+2(-1)^{p+q+r}y^{p+1}&+4(-1)^{q}y^{2p+2r+q+2}&+4(-1)^{p}y^{2r+2q+p+2}\\
&+2(-1)^{p+q+r}y^{3+p}&+2(-1)^{p+q+r}y^{q+3}&+2(-1)^{q+r}y^{2p+q+2}&+2(-1)^{p+r}y^{p+2q+2}\\
&+4(-1)^{p+q+r}y^{4+p}&+6(-1)^{q+r+p}y^{q+4}&-2(-1)^{q+r}y^{2p+q+3}&-2(-1)^{p+r}y^{2q+p+3}\\
&+2(-1)^{p+q}y^{2r+p+3}&+4(-1)^{p+q}y^{2r+q+3}&+6(-1)^{p}y^{2r+p+2q+4}&+2(-1)^{p+q}y^{p+2r+4}\\
&+4(-1)^{p+q}y^{2r+q+4}&-4(-1)^{q}y^{2p+2r+q+3}&-2(-1)^{p}y^{2q+2r+p+3}&+(-1)^{p+r}y^{2q}\\
&-2(-1)^{q}y^{2p+2r+q+5}&-2(-1)^{p}y^{2q+2r+p+5}&+2(-1)^{p+q}y^{p+2r+5}&+2(-1)^{p+q}y^{2r+q+5}\\
&+4(-1)^{q}y^{2r+2p+q+4}&+(-1)^{q+r}y^{2p}.
\end{array}
\end{align*}
Substituting $y=1$ into Equation (\ref{equ4}), it's easy to get the following result.
\begin{lemma}\label{art35}
Let $y=1$, then $\pi(Q(d(p, q, r));y=1)=\frac{1}{2}+\frac{3(-1)^{p}}{2}+\frac{3(-1)^{q}}{2}+\frac{9(-1)^{p+q}}{2}+8(-1)^{p+q+r}$.
\end{lemma}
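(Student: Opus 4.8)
The plan is to obtain the stated value simply by specializing the already-derived identity (\ref{equ4}) at $y=1$, so that the whole task reduces to evaluating the two explicit polynomials $f(y)$ and $f_{Q}(p,q,r;y)$ at $y=1$. First I would set $y=1$ on the left-hand side of (\ref{equ4}): the prefactor collapses to $y^{p+q+r}(y^{2}+1)^{3}=1\cdot 2^{3}=8$, so the left-hand side becomes $8\,\pi(Q(d(p,q,r));y=1)-f(1)$. Solving for the quantity of interest gives
\begin{equation*}
\pi(Q(d(p,q,r));y=1)=\frac{f(1)+f_{Q}(p,q,r;1)}{8},
\end{equation*}
and it only remains to compute the two numbers $f(1)$ and $f_{Q}(p,q,r;1)$.

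Second, I would evaluate $f(1)$. Since every power $y^{k}$ becomes $1$, each monomial contributes only its coefficient; the terms carrying $(-1)^{n}$ (with $n=p+q+r$) add up separately from the plain monomials. A short count of the ten summands gives $f(1)=16(-1)^{p+q+r}+4$.

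Third --- and this is the bulk of the work --- I would evaluate $f_{Q}(p,q,r;1)$. Again every $y$-power disappears, so each summand degenerates to its coefficient times a sign of the form $(-1)^{ap+bq+cr}$; because only parities matter, these signs reduce to one of the eight patterns $1$, $(-1)^{p}$, $(-1)^{q}$, $(-1)^{r}$, $(-1)^{p+q}$, $(-1)^{p+r}$, $(-1)^{q+r}$, $(-1)^{p+q+r}$. The natural bookkeeping is therefore to sort the several dozen monomials of $f_{Q}$ into these eight classes and sum the coefficients within each class; I expect the $(-1)^{r}$, $(-1)^{p+r}$ and $(-1)^{q+r}$ classes to cancel, leaving $f_{Q}(p,q,r;1)=12(-1)^{p}+12(-1)^{q}+36(-1)^{p+q}+48(-1)^{p+q+r}$.

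Finally I would combine the two evaluations. Adding $f(1)$ and $f_{Q}(p,q,r;1)$ and dividing by $8$ yields
\begin{equation*}
\pi(Q(d(p,q,r));y=1)=\tfrac{1}{2}+\tfrac{3(-1)^{p}}{2}+\tfrac{3(-1)^{q}}{2}+\tfrac{9(-1)^{p+q}}{2}+8(-1)^{p+q+r},
\end{equation*}
as claimed. The only genuine obstacle is the error-prone term-collection in the third step: $f_{Q}$ is a long sum with many near-duplicate sign patterns, so the main risk is a miscounted coefficient or an overlooked sign rather than any conceptual difficulty. To guard against this I would check each parity class independently and verify the total against the expected multiple-of-$8$ normalization coming from the left-hand side.
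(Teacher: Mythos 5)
Your proposal is correct and follows exactly the paper's route: the paper likewise obtains Lemma \ref{art35} by substituting $y=1$ into Equation (\ref{equ4}), where the prefactor becomes $8$ and one solves $\pi(Q(d(p,q,r));y=1)=\bigl(f(1)+f_{Q}(p,q,r;1)\bigr)/8$. Your intermediate evaluations $f(1)=16(-1)^{p+q+r}+4$ and $f_{Q}(p,q,r;1)=12(-1)^{p}+12(-1)^{q}+36(-1)^{p+q}+48(-1)^{p+q+r}$ (with the $(-1)^{r}$, $(-1)^{p+r}$, $(-1)^{q+r}$ classes cancelling) check out against the paper's expression for $f_{Q}$, so the argument is sound.
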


\begin{lemma}\label{art36}
There exist no two non-isomorphic  graphs as $d(p, q, r)$ share the same singless Laplacian permanental polynomial.
\end{lemma}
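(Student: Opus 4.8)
The plan is to follow the template of Lemma~\ref{art33}, substituting the signless Laplacian data for the Laplacian ones at every step. First I would suppose that $d(p', q', r')$ and $d(p, q, r)$, on $n'=p'+q'+r'$ and $n=p+q+r$ vertices respectively, are signless Laplacian copermanental. By part~(i) of Lemma~\ref{art24} the number of vertices is an invariant, so $n'=n$ and therefore $p'+q'+r'=p+q+r$. Because the polynomial $f(y)$ in Equation~(\ref{equ4}) depends only on $n$, the copermanental hypothesis together with $n'=n$ immediately yields
\[
f_{Q}(p',q',r';y)=f_{Q}(p,q,r;y).
\]

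Next I would identify the monomial of largest exponent in $f_{Q}(p,q,r;y)$, which from the displayed formula is one of $2(-1)^{q}y^{2p+2r+q+8}$, $2(-1)^{p}y^{2q+2r+p+6}$, or $(-1)^{r}y^{2p+2q+2}$; rewriting the exponents as $2n-q+8$, $2n-p+6$, and $2n-2r+2$ shows that which one dominates is governed by the relative sizes of $p,q,r$, so a short case analysis is needed here. In each case I would equate the leading monomials of the two sides, matching both the exponent and its sign. For instance, in the first case the exponent equality forces $q=q'$ (whence $p+r=p'+r'$) and the sign factor $(-1)^{q}=(-1)^{q'}$ is then automatic; the remaining cases are symmetric. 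This pins down the unordered pair of cycle lengths and reduces the problem to finitely many candidate correspondences between $(p,q,r)$ and $(p',q',r')$: the genuine ones $(p,q,r)=(p',q',r')$ and $(p,q,r)=(q',p',r')$, together with the spurious ones that interchange one of the cycle lengths with the path length $r$.

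To discard the spurious correspondences I would appeal to Lemma~\ref{art212}. The quantity ${\rm per}(Q(d(p,q,r)))$ is an isomorphism invariant recoverable from the copermanental data (it is, up to sign, the constant term of $\pi(Q(d(p,q,r));x)$), and evaluating it through the subgraph sum $\sum_{K}\sum_{H\in H(K)}2^{c(H)}\prod_{i\notin K}d_i$ should show that swapping a cycle length with the path length changes its value, contradicting copermanentality. Only $(p,q,r)=(p',q',r')$ and $(p,q,r)=(q',p',r')$ survive, and in both cases $d(p',q',r')\cong d(p,q,r)$, which is the claim.

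The step I expect to be the main obstacle is this permanent computation. Unlike the Laplacian permanent of Lemma~\ref{art211}, the signless formula of Lemma~\ref{art212} carries no $(-1)^{|K|}$ factor, so the internal cancellations are different and the bookkeeping over the subgraphs $H\in H(K)$ — whose components are edges or the two cycles $C_p,C_q$ — must be redone from scratch to confirm that a cycle length and the path length are genuinely distinguished. A secondary difficulty is that the sign factors $(-1)^{p},(-1)^{q},(-1)^{r}$ attached to the leading monomials of $f_{Q}$ force the case analysis over the parities and relative sizes of $p,q,r$ when identifying and matching the dominant term.
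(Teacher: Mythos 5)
Your proposal follows the paper's own proof essentially verbatim: Lemma \ref{art24}(i) gives $p'+q'+r'=p+q+r$, Equation (\ref{equ4}) gives $f_{Q}(p',q',r';y)=f_{Q}(p,q,r;y)$, the candidate leading monomials $2(-1)^{q}y^{2p+2r+q+8}$, $2(-1)^{p}y^{2q+2r+p+6}$, $(-1)^{r}y^{2p+2q+2}$ are matched, and Lemma \ref{art212} is invoked to exclude the correspondences that swap a cycle length with the path length $r$, leaving only $(p,q,r)=(p',q',r')$ or $(p,q,r)=(q',p',r')$, both giving isomorphic graphs. The paper is in fact terser than you are at the final step---it cites Lemma \ref{art212} without carrying out the permanent computation you flag as the main obstacle---so your account matches, and is if anything more explicit than, the published argument.
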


\begin{proof}
Let $d(p', q', r')$ and $d(p, q, r)$ be two bicyclic graphs with vertices $n'=p'+q'+r'$, $n=p+q+r$, respectively. If $d(p', q', r')$ and $d(p, q, r)$ have the same signless Laplacian permanental polynomials, then by (i) of Lemma \ref{art24}, we have
\begin{equation}\label{}
  p'+q'+r'=p+q+r,
\end{equation}
from (\ref{equ4}), we get that
\begin{equation}\label{}
  f_{Q}(p',q',r';y)=f_{Q}(p,q,r;y).
\end{equation}
Clearly, the term in $f_{Q}(p,q,r;y)$ with the largest exponent is $2y^{2p+2r+q+8}$, $2y^{2q+2r+p+6}$ or $(-1)^{r}y^{2+2p+2q}$, and similarly for $f_{Q}(p',q',r';y)$.
From the equations above,we have $2y^{2p'+2r'+q'+8}\\=2y^{2p+2r+q+8}$, $2y^{2q'+2r'+p'+6}=2y^{2q+2r+p+6}$ or $(-1)^{r'}y^{2+2p'+2q'}=(-1)^{r}y^{2+2p+2q}$. Obtained from Lemma \ref{art212}, we have $p=p'$, $q=q'$, $r=r'$ or $p=q'$, $q=p'$, $r=r'$. Then $d(p', q', r')$ is isomorphic to $d(p, q, r)$.
\end{proof}
Analogously, by the definition of $\theta(p, q, r)$ and Lemma \ref{art25} $(b)$, we have
\begin{eqnarray*}
&&\pi(Q(\theta(p, q, r));x)\\
&=&(x-3)[(x-3)\pi(U'_{p})\pi(U'_{q})\pi(U'_{r})+\pi(U'_{p-1})\pi(U'_{q})\pi(U'_{r})
+\pi(U'_{p})\pi(U'_{q-1})\pi(U'_{r})\\
&&+\pi(U'_{p})\pi(U'_{q})\pi(U'_{r-1})]+[(x-3)\pi(U'_{p-1})\pi(U'_{q})\pi(U'_{r})+\pi(U'_{p-2})\pi(U'_{q})\pi(U'_{r})\\
&&+\pi(U'_{p-1})\pi(U'_{q-1})\pi(U'_{r})+\pi(U'_{p-1})\pi(U'_{q})\pi(U'_{r-1})]+[(x-3)\pi(U'_{p})\pi(U'_{q-1})\pi(U'_{r})\\
&&+\pi(U'_{p-1})\pi(U'_{q-1})\pi(U'_{r})
+\pi(U'_{p})\pi(U'_{q-2})\pi(U'_{r})
+\pi(U'_{p})\pi(U'_{q-1})\pi(U'_{r-1})]\\
&&+[(x-3)\pi(U'_{p})\pi(U'_{q})\pi(U'_{r-1})+\pi(U'_{p-1})\pi(U'_{q})\pi(U'_{r-1})
+\pi(U'_{p})\pi(U'_{q-1})\pi(U'_{r-1})\\
&&+\pi(U'_{p})\pi(U'_{q})\pi(U'_{r-2})]+2(-1)^{q+r+2}\pi(U'_{p})+2(-1)^{p+r+2}\pi(U'_{q})+2(-1)^{p+q+2}\pi(U'_{r}).
\end{eqnarray*}
Plugging $x=\frac{y^{2}+2y-1}{y}$ and Lemma \ref{art210} $(c)$, with Maple 13.0, we obtain
\begin{equation}\label{equ5}
  y^{p+q+r+2}(y^{2}+1)^{3}\pi(Q(\theta(p, q, r));y)-f(y)=f_{Q}(p, q, r;y),
\end{equation}
where $n=p+q+r+2$,
$f(y)=4(-1)^{n-2}y^{4}+4(-1)^{n-2}y^{3}+2(-1)^{n-2}y+5(-1)^{n-2}y^{2}+(-1)^{n-2}-4y^{2n+3}-2y^{2n+5}+5y^{2n+4}+y^{2n}+10y^{2n+6}+3y^{2n+2}$, and
\begin{align*}
\begin{array}{lllll}
f_{Q}(p, q, r;y)&=2(-1)^{q+r}y^{2p+r+q+8}&+2(-1)^{p+r}y^{2q+r+p+8}&+2(-1)^{p+q}y^{2r+q+p+8}\\
&+(-1)^{s}y^{6+2p+2r}&+(-1)^{r}y^{6+2p+2q}&+2(-1)^{p}y^{6+2q+2r}\\
&+(-1)^{q}y^{2p+2r+4}&+(-1)^{p+q}y^{4+2r}&+2(-1)^{p+r}y^{2q+4}\\
&-2(-1)^{q}y^{2p+2r+5}&-2(-1)^{r}y^{2p+2q+5}&+2(-1)^{p+q}y^{2r+5}\\
&+2(-1)^{r+p}y^{2q+5}&+2(-1)^{q+r}y^{2p+5}&+(-1)^{p+q}y^{2r+6}\\
&+(-1)^{p+r}y^{2q+6}&-2(-1)^{p}y^{2q+2r+5}&+(-1)^{r+q}y^{2p+6}\\
&+(-1)^{r}y^{2p+2q+2}&-(-1)^{p+r}y^{2q+2}&+2(-1)^{p+q+r}y^{r+p+2}\\
&+2(-1)^{q+p+r}y^{q+r+2}&+(-1)^{r+q}y^{4+2p}&+2(-1)^{q+r}y^{2p+q+r+4}\\
&+4(-1)^{p+q+r}y^{p+q+4}&+4(-1)^{p+q+r}y^{r+p+4}&+4(-1)^{p+q+r}y^{q+r+4}\\
&+2(-1)^{p+r+q}y^{p+q+2}&+2(-1)^{p+q+r}y^{6+p+q}&+2(-1)^{p+q+r}y^{r+p+6}\\
&+2(-1)^{p+r+q}y^{6+q+r}&+2(-1)^{p+q}y^{2r+q+p+4}&+2(-1)^{p+r}y^{2q+p+r+4}\\
&+4(-1)^{p+q}y^{2r+p+q+6}&+4(-1)^{p+r}y^{2q+p+r+6}&+4(-1)^{q+r}y^{r+q+2p+6}.
\end{array}
\end{align*}
Substituting $y=1$ into Equation (\ref{equ5}), we know the following result.
\begin{lemma}\label{art37}
Let $y=1$, then $\pi(Q(\theta(p, q, r));y=1)=\frac{1}{2}+\frac{3(-1)^{p+r}}{2}+\frac{3(-1)^{p+q}}{2}+\frac{3(-1)^{r+q}}{2}+5(-1)^{p+q+r}$.
\end{lemma}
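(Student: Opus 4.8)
The plan is to prove Lemma \ref{art37} by a direct specialisation of Equation (\ref{equ5}) at $y=1$, exactly in the spirit of how Lemma \ref{art35} was obtained. First I would take the functional identity
\begin{equation*}
y^{p+q+r+2}(y^{2}+1)^{3}\pi(Q(\theta(p, q, r));y)-f(y)=f_{Q}(p, q, r;y)
\end{equation*}
and set $y=1$. At $y=1$ the prefactor $y^{p+q+r+2}$ collapses to $1$ and $(y^{2}+1)^{3}=8$, so the identity reduces to the scalar equation
\begin{equation*}
8\,\pi(Q(\theta(p, q, r));1)=f(1)+f_{Q}(p, q, r;1),
\end{equation*}
and it only remains to evaluate the two numbers on the right-hand side.

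For $f(1)$ I would substitute $y=1$ into the displayed formula for $f$, using $(-1)^{n-2}=(-1)^{p+q+r}$ since $n-2=p+q+r$; the terms carrying the factor $(-1)^{n-2}$ combine into a single multiple of $(-1)^{p+q+r}$, while the remaining powers of $y$ contribute a pure integer constant. The heart of the computation is $f_{Q}(p,q,r;1)$: since every monomial in $f_{Q}$ has the shape $c\,(-1)^{s}y^{t}$ with $c,s,t$ integers, putting $y=1$ kills all the exponents and leaves a finite sum of signed integers. I would organise the roughly three dozen summands into seven parity classes according to whether the attached sign is $(-1)^{p}$, $(-1)^{q}$, $(-1)^{r}$, $(-1)^{p+q}$, $(-1)^{p+r}$, $(-1)^{q+r}$ or $(-1)^{p+q+r}$, and add the coefficients within each class.

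The expected outcome of this bookkeeping is that the three single-index classes cancel to $0$ (reflecting the fact that $\theta(p,q,r)$ is symmetric under permutations of $p,q,r$, so no asymmetric term may survive), that each of the three pair classes $(-1)^{p+q}$, $(-1)^{p+r}$, $(-1)^{q+r}$ accumulates total coefficient $12$, and that the class $(-1)^{p+q+r}$ accumulates total coefficient $24$. Adding the constant from $f(1)$ and then dividing the whole equation by $8$ should yield
\begin{equation*}
\tfrac{1}{2}+\tfrac{3}{2}(-1)^{p+r}+\tfrac{3}{2}(-1)^{p+q}+\tfrac{3}{2}(-1)^{r+q}+5(-1)^{p+q+r},
\end{equation*}
which is precisely the asserted value.

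The one genuine obstacle is clerical rather than conceptual: correctly partitioning the long list of terms in $f_{Q}$, tracking the sign attached to each monomial, and verifying that the three single-index classes really do cancel. No tool beyond Equation (\ref{equ5}) and the trivial evaluations $(y^{2}+1)^{3}=8$ and $y^{p+q+r+2}=1$ is required, so the lemma is a pure bookkeeping consequence of the Maple-computed identity, and the main risk is an arithmetic slip in collecting the coefficients.
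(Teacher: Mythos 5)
Your proposal is correct and is essentially identical to the paper's own proof, which likewise just substitutes $y=1$ into Equation~(\ref{equ5}) and collects terms; your predicted class totals --- each single-index class cancelling to $0$, each pair class $(-1)^{p+q}$, $(-1)^{p+r}$, $(-1)^{q+r}$ summing to $12$, and the $(-1)^{p+q+r}$ class summing to $24$ --- are exactly what the terms of $f_{Q}(p,q,r;y)$ give. One caveat worth recording: as printed, the paper's $f(y)$ accompanying Equation~(\ref{equ5}) contains the term $10y^{2n+6}$, which would make $f(1)=13+16(-1)^{p+q+r}$ rather than the $4+16(-1)^{p+q+r}$ your final step needs; this is evidently a typo for $y^{2n+6}$ (for instance, $\theta(1,1,1)=K_{2,3}$ has ${\rm per}(2I-Q(K_{2,3}))=0$, which agrees with the lemma's stated value only under the corrected constant), so your argument goes through as intended.
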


\begin{lemma}\label{art38}
There exist no two non-isomorphic graphs as $\theta(p, q, r)$ share the same signless Laplacian permanental polynomial.
\end{lemma}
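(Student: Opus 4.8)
The plan is to mirror the arguments used for Lemmas \ref{art34} and \ref{art36}, now in the signless-Laplacian setting for $\theta$. Suppose $\theta(p',q',r')$ and $\theta(p,q,r)$, with $n'=p'+q'+r'+2$ and $n=p+q+r+2$ vertices respectively, share the same signless Laplacian permanental polynomial. First I would invoke Lemma \ref{art24}$(i)$ to deduce that the two graphs have the same number of vertices, hence $p'+q'+r'=p+q+r$. Feeding this into Equation (\ref{equ5}), the common factor $y^{p+q+r+2}(y^{2}+1)^{3}$ together with the shared term $f(y)$ cancel, so signless Laplacian copermanentality is equivalent to the polynomial identity $f_{Q}(p',q',r';y)=f_{Q}(p,q,r;y)$.

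Next I would compare the highest-degree monomials on each side. Reading off Equation (\ref{equ5}), the candidates for the term of largest exponent in $f_{Q}(p,q,r;y)$ are the six monomials
\[
2(-1)^{q+r}y^{2p+q+r+8},\quad 2(-1)^{p+r}y^{2q+p+r+8},\quad 2(-1)^{p+q}y^{2r+p+q+8},
\]
\[
(-1)^{q}y^{2p+2r+6},\quad (-1)^{r}y^{2p+2q+6},\quad 2(-1)^{p}y^{2q+2r+6},
\]
and likewise for $f_{Q}(p',q',r';y)$. Since the leading terms of the two sides must agree, the exponent of the leading monomial of $f_{Q}(p,q,r;y)$ equals that of $f_{Q}(p',q',r';y)$; because $p+q+r$ is already fixed, each resulting exponent equation forces a match between the multisets $\{p,q,r\}$ and $\{p',q',r'\}$. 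I would then appeal to Lemma \ref{art212} to discard the degenerate matchings that equate exponents without producing a genuine isomorphism, exactly as Lemma \ref{art36} uses it in the $d(p,q,r)$ case. Because $\theta(p,q,r)$ is invariant under every permutation of its three defining path-lengths, a match of the triples up to reordering already yields $\theta(p',q',r')\cong\theta(p,q,r)$, which is the desired conclusion.

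The hard part will be the sign bookkeeping. In the Laplacian analogue (Lemma \ref{art34}) the leading coefficients are uniformly positive, so only exponents need to be matched; here each leading coefficient carries a parity-dependent sign such as $(-1)^{q+r}$, so a valid match must reconcile both the exponent \emph{and} the sign, and this has to be checked across all parity combinations of $(p,q,r)$ and $(p',q',r')$. A secondary complication is that which of the six displayed monomials actually attains the largest exponent depends on the relative sizes of $p,q,r$: the group $2p+q+r+8,\dots$ is governed by $\max\{p,q,r\}$, whereas the group $2p+2r+6,\dots$ is governed by $\min\{p,q,r\}$. Hence a short case analysis on these relative sizes is needed before the exponent comparison can be carried out, after which the isomorphism follows as above.
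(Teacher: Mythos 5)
Your proposal follows essentially the same route as the paper's own proof: reduce to the identity $f_{Q}(p',q',r';y)=f_{Q}(p,q,r;y)$ via Lemma \ref{art24}$(i)$ and Equation (\ref{equ5}), compare the six candidate leading monomials, and invoke Lemma \ref{art212} together with the permutation symmetry of $\theta(p,q,r)$ to conclude the isomorphism. If anything, you are more explicit than the paper about the parity-dependent signs and the case analysis on the relative sizes of $p,q,r$, points the paper's proof passes over without comment.
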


\begin{proof}
Suppose $\theta(p', q', r')$ and $\theta(p, q, r)$ be two bicyclic graphs with vertices $n'=p'+q'+r'+2,n=p+q+r+2$, respectively. If $\theta(p', q', r')$ and $\theta(p, q, r)$ have the same signless Laplacian permanental polynomials, then by (i) of Lemma \ref{art24}, we have
\begin{equation}\label{}
  p'+q'+r'=p+q+r,
\end{equation}
obtained from (\ref{equ5}), we know that
\begin{equation}\label{}
  f_{Q}(p',q',r';y)=f_{Q}(p,q,r;y).
\end{equation}
Obviously, the term in $f_{Q}(p,q,r;y)$ with the largest exponent is $2y^{2r+p+q+8}$, $2y^{2p+q+r+8}$, $2y^{2q+p+r+8}$, $(-1)^{q}y^{2p+2r+6}$, $(-1)^{p}y^{6+2q+2r}$ or $(-1)^{r}y^{6+2p+2q}$, and similarly for $f_{Q}(p',q',r';y)$.
From the equations above, we have $2y^{2r+p+q+8}=2y^{2r'+p'+q'+8}$, $2y^{2p+q+r+8}=2y^{2p'+q'+r'+8}$, $2y^{2q+p+r+8}=2y^{2q'+p'+r'+8}$, $(-1)^{q}y^{2p+2r+6}=(-1)^{q'}y^{2p'+2r'+6}$, $(-1)^{p}y^{6+2q+2r}=(-1)^{p'}y^{6+2q'+2r'}$
or $(-1)^{r}y^{6+2p+2q}\\=(-1)^{r'}y^{6+2p'+2q'}$. All the
cases above with Lemma \ref{art212}, we obtain that $\theta(p', q', r')$ is isomorphic to $\theta(p, q, r)$.
\end{proof}


{\bf Proof of Theorem \ref{art13}.}
Let $G$ be a graph having the same signless Laplacian permanental polynomial as $d(p, q, r)$. By the definition of $d(p, q, r)$ and Lemma \ref{art213}, we know that the degree sequence of $G$ is $(3^{2}, 2^{n-2})$.

By Lemma \ref{art36} we have that $G\cong d(p, q, r)$ when $G$ is connected.

If $G=d(p', q', r')\cup(\bigcup\limits_{i=1}^{k} C_{k_{i}})$, then by Lemmas \ref{art210}$(d)$ and \ref{art35}, $\pi(Q(G);y=1)\neq \pi(Q(d(p, q, r));\\y=1)=\frac{1}{2}+\frac{3(-1)^{p}}{2}+
\frac{3(-1)^{q}}{2}+\frac{9(-1)^{p+q}}{2}+8(-1)^{p+q+r}$. This contradicts
the  assumption that $G$ and $d(p, q, r)$ are signless Laplacian copermanental.

If $G=\theta(p', q', r')\cup(\bigcup\limits_{i=1}^{k} C_{k_{i}})$, then by Lemma \ref{art210} $(d)$ and Lemmas \ref{art35}, \ref{art37}, $\pi(L(G);y=1)\neq \pi(Q(d(p, q, r));y=1)=\frac{1}{2}+\frac{3(-1)^{p}}{2}+
\frac{3(-1)^{q}}{2}+\frac{9(-1)^{p+q}}{2}+8(-1)^{p+q+r}$, a contradiction.
$\Box$


{\bf Proof of Theorem \ref{art14}.}
Let $G$ be a graph with the same signless Laplacian permanental polynomial as $\theta(p, q, r)$. Analogously, by the definition of $\theta(p, q, r)$ and Lemma \ref{art213}, we know that the degree sequence of $G$ is $(3^{2}, 2^{n-2})$.

By Lemma \ref{art38}, we know that $G$ is isomorphic to $\theta(p, q, r)$ if $G$ is connected.

If $G=\theta(p', q', r')\cup(\bigcup\limits_{i=1}^{k} C_{k_{i}})$, then by Lemmas \ref{art210}$(d)$ and \ref{art37}, $\pi(Q(G);y=1)\neq \pi(Q(\theta(p, q, r));\\y=1)=\frac{1}{2}+\frac{3(-1)^{p+r}}{2}+
\frac{3(-1)^{p+q}}{2}+\frac{3(-1)^{r+q}}{2}+5(-1)^{p+q+r}$. This contradicts
the  assumption that $G$ and $\theta(p, q, r)$ are signless Laplacian copermanental.

If $G=d(p', q', r')\cup(\bigcup\limits_{i=1}^{k} C_{k_{i}})$, then by Lemma \ref{art210} $(d)$ and Lemmas \ref{art35}, \ref{art37}, $\pi(Q(G);y=1)\neq \pi(Q(\theta(p, q, r));y=1)=\frac{1}{2}+\frac{3(-1)^{p+r}}{2}+\frac{3(-1)^{p+q}}{2}+
\frac{3(-1)^{r+q}}{2}+5(-1)^{p+q+r}$, a contradiction.
$\Box$

\noindent{\bf Data Availability}

No data were used to support this study.

\noindent{\bf Conflicts of Interest}

The authors declare that they have no conflicts of interest.

\noindent{\bf Acknowledgement}

The frist author is supported by the
NSFC Grant (11761056) and the NSF
of Qinghai Province Grant(2020-ZJ-920).

\end{document}